\newtheorem{theorem}{Theorem}[section]
\newtheorem{lemma}[theorem]{Lemma}
\newtheorem{proposition}[theorem]{Proposition}
\newtheorem{corollary}[theorem]{Corollary}
\theoremstyle{definition}
\newtheorem{definition}[theorem]{Definition}
\newtheorem{remark}[theorem]{Remark}
\newtheorem{remarks}[theorem]{Remarks}
\newtheorem{example}[theorem]{Example}
\newtheorem{nonexample}[theorem]{Non-example}
\newtheorem*{remark*}{Remark}
\newtheorem*{remarks*}{Remarks}
\newtheorem*{definition*}{Definition}
\newcommand{\Q}{\mathbf{Q}}
\newcommand{\R}{\mathbf{R}}
\newcommand{\Z}{\mathbf{Z}}
\newcommand{\N}{\mathbf{N}}
\newcommand{\cR}{\mathcal{R}}
\newcommand{\cC}{\mathcal{C}}
\newcommand{\cP}{\mathcal{P}}
\newcommand{\cA}{\mathcal{A}}
\newcommand{\cB}{\mathcal{B}}
\newcommand{\cU}{\mathcal{U}}
\newcommand{\cJ}{\mathcal{J}}
\newcommand{\cW}{\mathcal{W}}
\newcommand{\M}{\mathcal{M}}
\newcommand{\cM}{\mathcal{M}}
\newcommand{\fX}{\mathfrak{X}}
\newcommand{\Htop}{H_{top}}
\newcommand{\htop}{h_{top}}
\newcommand{\vq}{\textbf{q}}
\newcommand{\inv}{^{-1}}
\newcommand{\nn}{\nonumber}
\begin{document}
\date{\today\ (version 0.1)}
\title[Dynamics measured in a non-Archimedean field]{Dynamics measured in a non-Archimedean field}
\author[J.\ Kool]{Janne Kool}
\address{\normalfont{Mathematisch Instituut, Universiteit Utrecht, Postbus 80.010, 3508 TA Utrecht, Nederland}}
\email{j.kool2@uu.nl}
\thanks{I like to thank Gunther Cornelissen for his support.}

\maketitle
\begin{abstract}
 \noindent
 We study dynamical systems using measures taking values in a non-Archimedean field. The underlying space for such measure is a zero-dimensional topological space. In this paper we elaborate on the natural translation of several notions, e.g., probability measures, isomorphic transformations, entropy, from classical dynamical systems to a non-Archimedean setting.
\end{abstract}

\section*{Introduction}
The study of dynamical systems using measure-theoretic methods has shown to be extremely useful. In almost all results real or complex valued measures are used. However, there exists theories in which measures take values in other fields, in particular non-Archimedean fields. In a series of papers \cite{Monna} Monna started the study of non-Archimedean functional analysis. Integration theory using measures for non-Archimedean valued functions on locally compact topological zero dimensional spaces was developed by Monna and Springer \cite{MonnaSpringerI} \cite{MonnaSpringerII}, and later generalized to all zero-dimensional topological spaces by Van Rooij and Schikhof \cite{VanRooijSchikhof}. A nice overview of non-Archimedean functional analysis, including measure theory, can be found in Van Rooij \cite{VanRooij}. 

Recently, non-Archimedean analysis, and in particular measure theory, found several applications to theoretical physics \cite{kyr},\cite{Khrennikov}. In this paper we elaborate on the natural translation of several notions, e.g., probability measures, isomorphic transformations, entropy, from classical dynamical systems to a non-Archimedean setting.

Let us explain why it is expected that these notions behave differently. For a discrete complete non-Archimedean field $K$, we cannot expect that a $K$-valued Borel measure is $\sigma$-additive. On the contrary, any $\sigma$-additive $K$-valued function on the Borel algebra is trivial, i.e., such a function is a, possibly infinite, sum of Dirac measures (\!\cite{VanRooij}, lemma 4.19). To overcome this problem, instead of $\sigma$-algebras, separating covering rings are used. These rings form a basis for a zero-dimensional Hausdorff topological  space.

A measure $\mu:\cR\to K$ is then an additive map satisfying some boundedness and continuity condition (see \ref{measure} for the exact definition). It comes with a real valued function,
\[\|\cdot\|_\mu:\cR\to\R, A\mapsto\sup\{|\mu(B)|:B\in\cR,B\subset A\}.\]      
Sets $A\in\cR$ are \emph{negligible} if $\|A\|_\mu=0$. One of the most eye-catching distinctions between classical measures and these measures is that there exists a set $X_0(\mu)$, which is the \emph{biggest} negligible set, i.e., $A$ is negligible if and only if $A\subset X_0(\mu)$. The real-valued function $\|\cdot\|_\mu$ induces a seminorm on the space of $K$-valued functions on $X$:
\[\|f\|_\mu=\sup\{|f(x)|N_\mu(x):x\in X\}\]
where $N_\mu(x)=\inf\{\|A\|_\mu: A\in\cR, x\in A\}$. This seminorm is used to find  the integrable functions with respect to $\mu$. Let $S(X)$ be the $K$-vector space of step functions, i.e., the space of finite linear combinations of characteristic functions $\chi_A$ for $A\in\cR$. Integration with respect to $\mu$ is defined as the unique functional such that for every $A\in\cR$, $\int_X\chi_Ad\mu=\mu(A)$.  A function $f:X\to K$ is integrable if there exists a sequence $\{f_n\}$ of step functions in $S(X)$ such that $\lim_{n\to\infty}\|f_n-f\|_\mu=0$. This process leads to an extension of $\cR$ to a covering ring $\cR_\mu$ which contains all sets for which the characteristic function is integrable. The set is of integrable functions restricted to $X^+=X\backslash X_0(\mu)$ is a Banach space for the induced norm $\|\cdot\|_\mu$, denoted by $L^1(\mu)$. A triple $(X,\cR_\mu,\mu)$ is called a \emph{probability space} if $\cR_\mu$ is an algebra, and if $\mu(X)=1$. 

The aim of this paper is to develop the theory of dynamical systems on these probability spaces. We call a measurable map $T:X\to X$ \emph{measure preserving} if for any $A\in\cR$, $\mu(T\inv A)=\mu(A)$, and we call a four-tuple $(X,\cR_\mu,\mu,T)$ a \emph{dynamical system}. The first noticeable property is that the biggest negligible set $X_0(\mu)$ is invariant under any measure preserving transformation. This is very different from the classical situation: we can totally neglect $X_0(\mu)$ by restricting to $(X^+,\cR_\mu^+,\mu,T)$, where $\cR_\mu^+$ is the ring $\{A\cap X^+:A\in\cR_\mu\}$. A consequence is that the direct analog of the notion of ergodicity in the classical sense is not very useful; it reduces in this setting to the statement that any $T$-invariant subset of $X$ is negligible or contains the full set $X^+$.  

The notion of isomorphic dynamical systems, however, is still useful. In fact, neglecting $X_0(\mu)$ is also what happens in our definition; two dynamical systems $(X,\cR_\mu,\mu,T)$ and $(Y,\cR_\nu,\nu,S)$ are called \emph{isomorphic} if there is a measure preserving $(\cR_\mu,\cR_\nu)$-homeomorphism $\phi:X^+\to Y^+$ such that $\phi\circ T=S\circ\phi$. Completely analogous to the classical setting we call these two dynamical systems \emph{conjugate} if the is a measure algebra isomorphism $\Phi:(\cR_\mu,\mu)\to(\cR_\nu,\nu)$ such that $\Phi\inv\circ S\inv=T\inv\circ\Phi\inv$. In the classical theory isomorphy implies conjugacy, but not conversely. In our theory we find that isomorphy and conjugacy are in fact equivalent (see theorem \ref{isomisconj}). 

A measure preserving transformation induces a linear map $U_T:L^1(\mu)\to L^1(\mu), f\to f\circ T$ which is an isometry if $T$ is invertible. The definition of \emph{spectral isomorphy} of two invertible measure preserving transformations $T:X\to X$ and $S:Y\to Y$ comprises an isometry $W:L^1(\mu)\to L^1(\nu)$ such that $U_S\circ W=W\circ U_T$. If two dynamical systems are isomorphic, then they are also spectral isomorphic. We give some conditions under which spectral isomorphy implies conjugacy, and hence isomorphy (see lemma \ref{conditions}). It is an interesting problem if these are necessary. 

In the last section we develop a notion of non-Archimedean \emph{measure entropy}, which is an invariant under isomorphisms. Let $\alpha$ be a partition of $X$ by elements of $\cR_\mu$ and let $M(\alpha)$ be the number of elements of $\cM(\alpha)=\{A\in\alpha:\|A\|>0 \}$. The measure entropy is the defined by $H_\mu(\alpha)=\min\{\|A\|\log M(\alpha): A\in\cM(\alpha)\}$.  This measure entropy is connected to the topological entropy, for the topology induced by $\cR_\mu$. For compact $X^+$ and measures $\mu$ such that for all nonempty $A\in\cR_\mu^+$, $|\mu(A)|=1$ if the measure entropy equals the topological entropy (see theorem \ref{entropy}). 

Let us also digress on some notions from dynamical systems of which we don't know how to translate them to the non-Archimedean setting; Poincar\'e recurrence and the Birkhoff ergodic theorem. Poincar\'e recurrence states that in a probability space for any non-negligible measurable set $A$ the subset of $A$ of elements which are not recurrent under a measure preserving $T$ is negligible. The classical proof relies heavily on the measurability of the set of non-recurrent points. The non-Archimedean measures, however, are in general not $\sigma$-additive. In fact, it is possible to construct examples for which the recurrent set is not measurable and indeed not negligible.

The Birkhoff ergodic theorem assures the convergence in $L^1$ of the average 
\[\frac{1}{n}\sum_{i=0}^{n-1}f(T^i(x)),\]
to a $T$-invariant function in $L^1$. However, in a non-Archimedean field there is not a notion of average. In particular, the sequence $\{\frac{1}{n}\}_{n\in\N}$ is not convergent. 

Throughout we discuss several examples.

\section{Non-Archimedean measures and integration theory}
\subsection*{Measures}
We start by explaining the main set up, as described in (\!\cite{Van Rooij}, chapter 7). Instead of $\sigma$-algebras, separating covering rings are used. For any set $X$, denote by $\cP(X)$ its power set.

\begin{definition}\label{measure}
A collection $\cR\subseteq\mathcal P(X)$ is called a \emph{covering ring} if it has the following properties: 
\begin{enumerate}
\item if $A,B\in \cR$ then $A\cap B$, $A\cup B$ and $A\backslash B$ are in $\cR$. 
\item $\cR$ covers $X$.
\end{enumerate}
Such a ring $\cR$ is called \emph{separating} if for any distinct $x,y\in X$, there is an $A\in\cR$ such that $x\in A,$ and $y\not\in A$. A covering ring is an \emph{algebra} if $X\in\cR$.
\end{definition}	

A covering ring $\cR $ is the basis of a zero dimensional topology--in which the elements of $\cR$ are closed and open. This so called $\cR$-topology is Hausdorff if and only if $\cR$ is separating. In the text below all covering rings are separating. A subcollection $\mathcal A\subset \cR$ is called \emph{shrinking} if the intersection of any two elements of $\mathcal A$ contains an element of $\mathcal A$. 

\begin{definition}
A map $\mu:\cR\to K$ is called a \emph{measure}, if
\begin{description}
\item[\emph{additive}] for disjoint $A,B\in\cR$, $\mu(A\cup B)=\mu(A)+\mu(B)$.
\item[\emph{bounded}] for all $A\in \cR$ the set $\{\mu(B): B\in \cR, B\subset A\}$ is bounded,
\item[\emph{continuous}] if $\mathcal A$ is shrinking and $\bigcap\limits_{A\in\cA}A=\emptyset$, then $\lim\limits_{A\in\mathcal A}\mu(A)=0$ 
\end{description} 
The latter limit is defined as follows: for every $\epsilon>0$ there is a $A_0\in\mathcal A$, such that for every $A\in\mathcal A$ contained in $A_0$, $|\mu(A)|<\epsilon$.
\end{definition}
The continuity property of the measure is the replacement for $\sigma$-additivity.  

\begin{lemma}
Let $\{B_n\}_{n\in\N} \subset\cR$ a collection of disjoint sets such $\bigcup_{n\in\N}B_n\in\cR$, then $\mu(\bigcup_{n\in\N}B_n)=\sum_n\mu(B_n)$. 
\end{lemma}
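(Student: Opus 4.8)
The plan is to derive $\sigma$-additivity in this countable form from the continuity axiom by constructing an appropriate shrinking family with empty intersection out of the tails of the series. Concretely, set $B=\bigcup_{n\in\N}B_n\in\cR$ and, for each $N\in\N$, let $R_N=B\setminus\bigcup_{n=0}^{N}B_n$. Since $\cR$ is a covering ring, finite unions and set differences stay in $\cR$, so each $R_N\in\cR$; moreover the $R_N$ are nested, $R_0\supseteq R_1\supseteq\cdots$, and $\bigcap_{N\in\N}R_N=\emptyset$ because every point of $B$ lies in some $B_n$ and hence is excluded from $R_n$ onward.

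Next I would observe that the family $\cA=\{R_N:N\in\N\}$ is shrinking: the intersection of $R_M$ and $R_N$ is just $R_{\max(M,N)}$, which is itself in $\cA$. Since $\bigcap_{A\in\cA}A=\emptyset$, the continuity axiom gives $\lim_{A\in\cA}\mu(A)=0$, which in this nested situation simply means $\mu(R_N)\to 0$ as $N\to\infty$ in the valuation on $K$. Now additivity (extended to finite disjoint unions by induction) yields, for each $N$,
\[
\mu(B)=\sum_{n=0}^{N}\mu(B_n)+\mu(R_N),
\]
because $B$ is the disjoint union of $B_0,\dots,B_N$ and $R_N$. Letting $N\to\infty$ and using $\mu(R_N)\to 0$ shows that the partial sums $\sum_{n=0}^{N}\mu(B_n)$ converge to $\mu(B)$ in $K$, which is precisely the assertion $\mu\!\left(\bigcup_{n\in\N}B_n\right)=\sum_n\mu(B_n)$ (and in particular the series on the right converges).

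The only genuinely delicate point is making sure the continuity axiom is applied correctly: the definition of $\lim_{A\in\cA}\mu(A)=0$ quantifies over all $A\in\cA$ contained in some $A_0$, so I need the monotonicity of the $R_N$ to know that "contained in $R_{N_0}$" is equivalent to "index $\ge N_0$", turning the net limit into an ordinary sequential limit. Everything else is bookkeeping: that $R_N\in\cR$ uses only the ring axioms, and that $B=\bigsqcup_{n\le N}B_n\sqcup R_N$ is immediate from the disjointness hypothesis. I do not expect to need boundedness of $\mu$ here, though it guarantees the partial sums do not escape to something ill-defined; the real content is entirely carried by continuity plus finite additivity.
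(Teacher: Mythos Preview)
Your proof is correct and follows essentially the same approach as the paper: the paper defines $A_n=\bigcup_{i\ge n}B_i$, which coincides with your tails $R_{n-1}$ (up to an index shift), shows this is a shrinking family with empty intersection, and then uses continuity together with finite additivity/telescoping to conclude. Your write-up is in fact more careful about why each $R_N\in\cR$ and why the net limit reduces to a sequential one.
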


\begin{proof}
 Define $A_n=\bigcup_{i=n}^\infty B_i$, then $\{A_i\}_{i\in\N}$ forms a shrinking collection with empty intersection. Moreover, $B_i=A_i\backslash A_{i+1}$ and $\mu(B_i)=\mu(A_i)-\mu(A_{i+1})$. Hence 
\[\mu(\bigcup_iB_i)=\lim_{n\to\infty}\sum_{i=1}^n\mu(B_i)=\lim_{n\to\infty}\sum_{i+1}^n\mu(A_i)-\mu(A_{i+1})=\mu(A_1).\]\end{proof}
Let us illustrate these measures with some examples, and also a non-example to illustrate the necessity of the continuity condition.

\begin{example}\label{Qp}
In our first example $X=\Q_p$, the field of $p$-adic numbers. It is a valued field, and the valuation $v_p$ induces a metric $|x-y|_p=p^{-v_p(x-y)}$. This metric is non-Archimedean, and, therefore, balls of the form $B_{p^n}(x)=\{y\in X:|x-y|<p^n\}$ are both open and closed. The collection $B_c(X)$ of all compact clopen subsets of $X$ is a covering ring. Let $q$ be a prime number. There is an unique additive function $\mu:B_c(X)\to\Q_q$ such that $\mu(B_{p^n}(x))=p^n$. If $q\not=p$, then $\mu$ is a measure, which takes values in $\Z_q^*\cup\{0\}$, where $\Z_q^*$ denotes the ring of invertible $q$-adic integers.  If $p=q$, then $\mu$ is not a measure, because the boundedness condition for measures is violated. For instance, the values of $\mu$ of the sequence of sets 
\[B_{p^0}(0)\supset B_{p^{-1}}(0)\supset....\supset B_{p^{-n}}(0)\supset...,\] 
form a sequence with increasing, and even unbounded, absolute values.
\end{example}

\begin{example}\label{h}
Let $X$ be any set, and let $\cR$ be the ring which consists of all finite subsets of $X$. Let $h:X\to K$ be any function. Then $\kappa:\cR\to K, A\mapsto\sum_{a\in A}h(a)$ is a measure. More generally, let $h:X\to K$ be any function, and let $\cR_h$ be the ring which consists of all subsets $A$ of $X$ for which $\sum_{a\in A}h(a)$ converges. Define the measure $\kappa$ as above.
\end{example}

\begin{nonexample}\label{nonexample}
This example is a non-example in the following sense. The map described below is additive and bounded, but we will later prove that it lacks the continuity condition. Let $p$ be a prime and let  $\fX_p^k=\{\frac{n}{p^k}:0\leq n\leq p^k\}$ and $\fX_p=\cup_{k\geq0}\fX_p^k$. Finally define $X_p=[0,1]\backslash\fX_p$. For $r,s\in\fX_p, r<s$ let $I_{r,s}$ be the open interval $(r,s)\subset [0,1]$. Then the collection $J_{r,s}=I_{r,s}\cap X_p$ generates a covering algebra $\cJ$ of $X_p$. Define 
\[\upsilon:\cJ\to\Q_p, \upsilon(J_{r,s})=\left\{\begin{array}{cc}\frac{1}{s}-\frac{1}{r}& \text{ if } r\not=0\\ \frac{1}{s}& \text{ if } r=0\end{array}\right.\]
The map $\upsilon$ is bounded because $v_p(\frac{1}{x})\geq 0$ for all $x\in\fX_p$, and hence $v_p(\upsilon(J_{r,s}))\geq 0$ for all $r,s\in\fX_p$. 
\end{nonexample}

One of the main losses of using $K$-valued measures is that $K$ is not ordered, and therefore, the measure doesn't order sets into "bigger" or "smaller". Moreover, it could very well happen that there are sets of measure zero which contain sets of non-zero measure. To overcome these problems, a measure comes with a real valued function:
\[\|\cdot\|_\mu:\cR\to\R:A\mapsto\sup\{|\mu(B)|: B\in\cR, B\subset A\}.\]
In case confusion is unlikely we will suppress the subscript $\mu$. This $\R$-valued function is not at all like a real valued measure in the classical sense; it possesses the following properties:
\begin{description}
 \item[\emph{monotone}] if $A\subset B$, then $\|A\|\leq\|B\|$,
 \item[\emph{convex}] for any $A,B\in\cR$, $\|A\cup B \|\leq\max(\|A\|,\|B\|)$.
 \item[\emph{minimum}] for $\|A\cap B\|\leq\min(\|A\|,\|B\|)$
\end{description}

\begin{lemma}[\!\cite{VanRooij}, page 249]\label{equivalentcontinuity}
The continuity property of a measure is equivalent to the following assertion. If $\cA\subset\cR$ is shrinking and $\bigcap\limits_{A\in\cA} A=\emptyset$, then $\lim\limits_{A\in\cA}\|A\|=0$.  
\end{lemma}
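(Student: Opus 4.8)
The implication ``assertion $\Rightarrow$ continuity'' is immediate: since $|\mu(A)|\le\|A\|_\mu$ for every $A\in\cR$ by the very definition of $\|\cdot\|_\mu$, if $\|A\|_\mu\to 0$ along a shrinking $\cA$ with $\bigcap_{A\in\cA}A=\emptyset$, then $\mu(A)\to 0$ as well. The content of the lemma is the converse, which I would prove by contradiction, assuming throughout that $\mu\colon\cR\to K$ is additive, bounded, and satisfies the continuity property.

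Suppose $\mu$ is continuous but there is a shrinking $\cA$ with $\bigcap_{A\in\cA}A=\emptyset$ along which $\|\cdot\|_\mu$ does not tend to $0$. Since $A'\subseteq A$ forces $\|A'\|_\mu\le\|A\|_\mu$ (monotonicity of $\|\cdot\|_\mu$), this gives an $\eta>0$ with $\|A\|_\mu\ge\eta$ for \emph{every} $A\in\cA$; for each such $A$ fix $B_A\in\cR$ with $B_A\subseteq A$ and $|\mu(B_A)|\ge\eta/2$. The plan is to convert this ``uniform supply of mass below every member of $\cA$'' into a shrinking family with empty intersection along which $\mu$ does not tend to $0$ --- contradicting continuity.

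The mechanism is the observation that for any $B\in\cR$ the family $\{A\cap B:A\in\cA\}$ lies in $\cR$, is again shrinking, and has empty intersection ($\bigcap_{A}(A\cap B)=B\cap\bigcap_A A=\emptyset$), so by continuity $\mu(A\cap B)\to 0$ along $\cA$. Using this (together with continuity of $\cA$ itself, and the fact that a shrinking family contains common lower bounds of any finitely many of its members) I would build a decreasing sequence $A_1\supseteq A_2\supseteq\cdots$ in $\cA$ such that at each stage $|\mu(B_{A_n}\cap A_{n+1})|<\eta/2$ and $|\mu(A_{n+1})|<\eta/2$. Then $|\mu(B_{A_n}\cap A_{n+1})|<|\mu(B_{A_n})|$, so the non-Archimedean triangle inequality yields $|\mu(B_{A_n}\setminus A_{n+1})|=|\mu(B_{A_n})|$; putting $C_n:=B_{A_n}\setminus A_{n+1}$ we get $C_n\subseteq A_n\setminus A_{n+1}$, $|\mu(C_n)|\ge\eta/2$, and the $C_n$ pairwise disjoint. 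Setting $E_n:=C_n\cup A_{n+1}$ one checks: $E_{n+1}\subseteq A_{n+1}\subseteq E_n$, so $\{E_n\}$ is shrinking; $\mu(E_n)=\mu(C_n)+\mu(A_{n+1})$ with $|\mu(A_{n+1})|<\eta/2\le|\mu(C_n)|$, so $|\mu(E_n)|\ge\eta/2$ for all $n$; and $\bigcap_n E_n=\bigcap_n A_n$, since a point in all the $E_n$ lies in at most one $C_n$, hence in all but finitely many $A_{n+1}$, while conversely $\bigcap_n A_n\subseteq A_{n+1}\subseteq E_n$.

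Hence, once the construction is arranged so that $\bigcap_n A_n=\emptyset$, the family $\{E_n\}$ violates the continuity of $\mu$ and the proof is complete. When $\cA$ contains a countable subfamily $\{D_k\}_{k\in\N}$ with $\bigcap_k D_k=\emptyset$ this is automatic: one additionally demands $A_{n+1}\subseteq D_{n+1}$ at each stage, whence $\bigcap_n A_n\subseteq\bigcap_k D_k=\emptyset$. The real obstacle is the general, possibly uncountably generated, case: then $\{A_n\}$ may have non-empty intersection, and intersecting the $E_n$ back with members of $\cA$ tends to kill the lower bound on $|\mu|$. I expect one must either reduce to the countable case --- showing that the standing hypotheses already force some countable subfamily of $\cA$ to have empty intersection (morally: the mass witnessed by the $B_A$ is carried by a ``small'' subset of $X$) --- or run a transfinite version of the above that exhausts $\bigcap_n A_n$. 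That reduction is the step I anticipate will require the real work; the remainder is the ultrametric bookkeeping sketched above.
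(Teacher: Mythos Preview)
The paper does not prove this lemma at all: it is quoted from Van Rooij's book with only a page reference, so there is no ``paper's own proof'' to compare against. What follows is an assessment of your argument on its own merits.

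Your easy direction and the set-up for the converse are fine, and the ultrametric bookkeeping with the $C_n$ and $E_n$ is correct. The gap you yourself flag, however, is genuine and is not a mere technicality: nothing in the hypotheses lets you extract a countable subfamily of $\cA$ with empty intersection, and your proposed transfinite exhaustion would not interact well with the requirement that each $E_n$ have $|\mu(E_n)|\ge\eta/2$. So as written the proof is incomplete.

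The fix is to avoid passing to a sequence altogether and work with $\cA$ directly. Consider
\[
\cB \;=\; \{\,B_A\cup A' : A,A'\in\cA,\ A'\subseteq A\,\}\subset\cR.
\]
This family is shrinking (given two members, pick $A_3\in\cA$ inside the intersection of the two $A'$'s; then $B_{A_3}\cup A'_3\subseteq A_3$ lies below both), and has empty intersection since every member is contained in some $A\in\cA$. By continuity of $\mu$ there is $B_{A_0}\cup A'_0\in\cB$ below which $|\mu|<\eta/2$. Now fix any $A\in\cA$ with $A\subseteq A'_0$. The family $\{A'\setminus B_A: A'\in\cA,\ A'\subseteq A\}$ is shrinking with empty intersection, so continuity gives some $A'$ with $|\mu(A'\setminus B_A)|<\eta/2\le|\mu(B_A)|$. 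The ultrametric inequality then forces
\[
|\mu(B_A\cup A')| \;=\; |\mu(B_A)+\mu(A'\setminus B_A)| \;=\; |\mu(B_A)| \;\ge\; \eta/2,
\]
while $B_A\cup A'\subseteq A\subseteq A'_0\subseteq B_{A_0}\cup A'_0$ puts this member of $\cB$ below the threshold, a contradiction. This replaces your countable construction and closes the gap cleanly.
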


Define the \emph{norm function} $N_\mu:X\to[0,\infty)$ by
\[N_\mu(x)=\inf\{\|U\|_\mu:U\in\cR, x\in U\}.\]
The reason that this function is called the norm function is because it is used to define a seminorm on the space of $K$-valued functions on $X$. For $f:X\to K$ define
\[\|f\|_\mu=\sup_{x\in X}|f(x)|N_\mu(x).\]
This apparent abuse of notation is justified by the following lemma.

\begin{lemma}[\!\cite{VanRooij}, Lemma 7.2]\label{normindicator}
For the indicator function $\chi_B$ of any $B\in\cR$, $\|\chi_B\|_\mu=\|B\|_\mu$. 
\end{lemma}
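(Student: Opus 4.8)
The plan is to unwind both sides and then establish the reverse inequality by a continuity argument. Fix $B\in\cR$. Since $\chi_B$ equals $1$ on $B$ and $0$ elsewhere, $\|\chi_B\|_\mu=\sup_{x\in X}|\chi_B(x)|N_\mu(x)=\sup_{x\in B}N_\mu(x)$ (with the convention $\sup\emptyset=0$, which also settles $B=\emptyset$), so it suffices to prove $\sup_{x\in B}N_\mu(x)=\|B\|_\mu$. The inequality ``$\le$'' is immediate: for each $x\in B$ the set $B$ itself is an element of $\cR$ containing $x$, so $N_\mu(x)=\inf\{\|U\|_\mu:U\in\cR,\ x\in U\}\le\|B\|_\mu$, and we take the supremum over $x\in B$.

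For the reverse inequality I argue by contradiction. Put $t:=\sup_{x\in B}N_\mu(x)$ and assume $\|B\|_\mu>t$. By definition of $\|\cdot\|_\mu$ there is $C\in\cR$ with $C\subset B$ and $r:=|\mu(C)|>t$; then $C\ne\emptyset$, and since $\sup_{x\in C}N_\mu(x)\le t<r$ we have $N_\mu(x)<r$ for every $x\in C$. So for each $x\in C$ we may pick $U_x\in\cR$ with $x\in U_x$ and $\|U_x\|_\mu<r$, and after replacing $U_x$ by $U_x\cap C$ (legitimate by the monotonicity property) we may assume $U_x\subset C$. Set
\[
\cD=\{\,D\in\cR:\ D\subset C,\ \|D\|_\mu<r\,\}.
\]
By the convexity property $\cD$ is closed under finite unions; it contains $\emptyset$ (as $r>0$) and every $U_x$, hence $\bigcup_{D\in\cD}D\supseteq\bigcup_{x\in C}U_x=C$, so in fact $\bigcup_{D\in\cD}D=C$.

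Now consider $\cA=\{\,C\setminus D:\ D\in\cD\,\}\subseteq\cR$. For any $D_1,D_2\in\cD$ one has $(C\setminus D_1)\cap(C\setminus D_2)=C\setminus(D_1\cup D_2)\in\cA$, so $\cA$ is shrinking, and $\bigcap_{D\in\cD}(C\setminus D)=C\setminus\bigcup_{D\in\cD}D=\emptyset$. By Lemma~\ref{equivalentcontinuity}, $\lim_{A\in\cA}\|A\|_\mu=0$; applied with $\epsilon=r$ this yields $D_0\in\cD$ with $\|C\setminus D_0\|_\mu<r$. Since $C=D_0\sqcup(C\setminus D_0)$ with both pieces in $\cR$, the convexity property gives
\[
r=|\mu(C)|\le\|C\|_\mu\le\max\bigl(\|D_0\|_\mu,\ \|C\setminus D_0\|_\mu\bigr)<r,
\]
a contradiction. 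Hence $\|B\|_\mu\le t$, and the two inequalities together finish the proof.

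The only genuine difficulty is the reverse inequality, and within it the fact that $C$ need not be compact in the $\cR$-topology, so one cannot simply pass to a finite subcover of $\{U_x\}_{x\in C}$. The remedy is to replace the cover by the directed family $\cD$ of ``small'' subsets of $C$, whose union still exhausts $C$; the continuity axiom of $\mu$ — the non-Archimedean surrogate for $\sigma$-additivity — then forces the complementary shrinking family $\cA$ to have $\|\cdot\|_\mu$ tending to $0$, which is exactly what drives the contradiction.
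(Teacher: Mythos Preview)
Your proof is correct. Note, however, that the paper does not supply its own argument for this lemma: it is quoted from Van~Rooij (Lemma~7.2) and stated without proof, so there is no in-paper proof to compare against.

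One small observation: you invoke Lemma~\ref{equivalentcontinuity} to obtain $\|C\setminus D_0\|_\mu<r$, but the raw continuity axiom in the definition of a measure already suffices. From $\lim_{A\in\cA}\mu(A)=0$ one gets $D_0\in\cD$ with $|\mu(C\setminus D_0)|<r$; since also $|\mu(D_0)|\le\|D_0\|_\mu<r$, additivity and the ultrametric inequality yield
\[
r=|\mu(C)|=|\mu(D_0)+\mu(C\setminus D_0)|\le\max\bigl(|\mu(D_0)|,\ |\mu(C\setminus D_0)|\bigr)<r,
\]
the same contradiction. This makes the argument independent of Lemma~\ref{equivalentcontinuity} and hence removes any concern about the logical order of the two cited results in Van~Rooij's original treatment.
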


We denote the level set $\{x\in X:N_\mu(x)=0\}$ by $X_0(\mu)$.
\begin{definition}
Subsets of $X_0(\mu)$ are called \emph{negligible}.
\end{definition}

\begin{lemma}\label{equivalentverwaarloosbaar}
For any $A\in\cR$ the following properties are equivalent.
\begin{enumerate}
 \item\label{negligible} $A$ is negligible,
 \item\label{nul} $\|A\|=0$,
 \item\label{intersection} for all $B\in\cR$, $\mu(A\cap B)=\mu(A)$.
\end{enumerate}
\end{lemma}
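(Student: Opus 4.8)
The plan is to obtain the equivalence of \ref{negligible} and \ref{nul} directly from Lemma~\ref{normindicator}, and the equivalence of \ref{nul} and \ref{intersection} from a short computation using only additivity of $\mu$; the continuity axiom plays no role. For \ref{negligible}$\iff$\ref{nul}, I apply Lemma~\ref{normindicator} to $A$ itself, which gives $\|A\|_\mu=\|\chi_A\|_\mu=\sup_{x\in X}|\chi_A(x)|N_\mu(x)=\sup_{x\in A}N_\mu(x)$. Since $N_\mu$ takes values in $[0,\infty)$, this supremum is $0$ exactly when $N_\mu(x)=0$ for every $x\in A$, that is, exactly when $A\subseteq X_0(\mu)$; by definition this is precisely the statement that $A$ is negligible.

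For \ref{nul}$\implies$\ref{intersection}, note that $\|A\|_\mu=0$ means, directly from the definition of $\|\cdot\|_\mu$, that $\mu(B)=0$ for all $B\in\cR$ with $B\subseteq A$. Given any $B\in\cR$, both $A\cap B$ and $A\setminus B$ lie in $\cR$ and are contained in $A$, so they have measure $0$, and additivity along the disjoint decomposition $A=(A\cap B)\cup(A\setminus B)$ yields $\mu(A)=0=\mu(A\cap B)$. For \ref{intersection}$\implies$\ref{nul}, take $B'\in\cR$ with $B'\subseteq A$ and apply \ref{intersection} with $B=B'$ and with $B=A\setminus B'$, obtaining $\mu(B')=\mu(A\cap B')=\mu(A)$ and $\mu(A\setminus B')=\mu(A\cap(A\setminus B'))=\mu(A)$; additivity on $A=B'\cup(A\setminus B')$ then forces $2\mu(A)=\mu(A)$, so $\mu(A)=0$ and hence $\mu(B')=0$. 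As $B'$ was arbitrary, $\|A\|_\mu=\sup\{|\mu(B')|:B'\in\cR,\ B'\subseteq A\}=0$.

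I do not expect any genuine obstacle: every step is formal once the definitions are unfolded. The one place that wants care is to invoke Lemma~\ref{normindicator} in the right direction, so that the pointwise condition ``$N_\mu$ vanishes throughout $A$'' is recognized as the statement $\|A\|_\mu=0$; a direct proof of \ref{negligible}$\implies$\ref{nul} avoiding that lemma would seem to require $A$ to be compact in the $\cR$-topology, which is not part of the hypotheses.
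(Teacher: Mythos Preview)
Your proof is correct and follows essentially the same route as the paper: Lemma~\ref{normindicator} for \ref{negligible}$\iff$\ref{nul}, and elementary manipulations for \ref{nul}$\iff$\ref{intersection}. The only cosmetic difference is in \ref{intersection}$\Rightarrow$\ref{nul}: the paper simply takes $B=\emptyset$ in \ref{intersection} to obtain $\mu(A)=\mu(A\cap\emptyset)=0$ in one step, whereas you reach $\mu(A)=0$ via the additivity identity $2\mu(A)=\mu(A)$; both are fine.
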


\begin{proof}
We first prove that (\ref{negligible})$\Leftrightarrow$(\ref{nul}). 
Let $A$ be a negligible set. Then by lemma (\ref{normindicator}) $\|A\|=\|\chi_A\|=\sup_{x\in A}N_\mu(x)$, hence $\|A\|=0$ if and only if $A$ is negligible.

Next we prove (\ref{nul})$\Leftrightarrow$(\ref{intersection}). Suppose that $\|A\|=0$, then $\mu(A)=0$ and for all $B\in\cR$, $A\cap B\subset A$, and hence $\mu(A\cap B)=0$. Conversely, for any $B\in\cR$ with $B\subset A$, $B=B\cap A$, and since $\emptyset\in\cR$, $\mu(B)=\mu(A)=\mu(\emptyset )=0$. It follows that $\|A\|=0$.
\end{proof}

\begin{example}
Let us have a closer look to the examples discussed above. In example (\ref{Qp}) we have $X_0(\mu)=\emptyset$, while in $(\ref{h})$, $X_0(\kappa)=\{x\in X:h(x)=0\}$; in fact, $N_\kappa(x)=|h(x)|$. 
\end{example}

\begin{nonexample}
We determine $N_\upsilon(x)$ on $X_p$ in non-example (\ref{nonexample}). Any element $x\in [0,1]$ can be represented by
\[x=\sum_{i=1}^\infty\frac{a_i}{p^i}, \text{ where } a_i\in\{0,...,p-1\}.\]
 It is well known that such representation are sometimes not unique. Elements with representations with coordinates that are constant eventually, are in $\fX_p$. Let $x\in X_p$ with a expansion $x=\sum_{i=1}^\infty a_i/p^i$. Then for any $n\in\N$, 
\[\sum_{i=1}^n\frac{a_i}{p^i}<x<\frac{1}{p^n}+\sum_{i=1}^n\frac{a_i}{p^i}.\] 
We call this interval $J_n(x)$, and calculate $\upsilon(J_n(x))$ for a $n$ for which $a_n\not=0$. 
Denote $r=\sum_{i=1}^n\frac{a_i}{p^i},t=\frac{1}{p^n}$, and let $v_p$ be the p-adic valuation. As $v_p(r)=v_p(t)$ we find
\[v_p(\upsilon(J_n))=v_p(\frac{1}{r+t}-\frac{1}{t})=v_p(r)-(v_p(t)+v_p(r+t))=-v_p(r+t).\]
Define $k_n=-v_p(r+t)=\max\{k:a_k\not=p-1, k\leq n\}$. Since the coordinates of $x$ can not be constant $p-1$ eventually, it follows that 
\[\lim_{n\to\infty}\upsilon(J_n(x))=\lim_{n\to\infty}p^{k_n}=0.\]
In particular, this shows that $N_\upsilon(x)=0$ for all $x\in X_p$, i.e., the entire set $X_p$ is negligible. This finishes the proof that $\upsilon$ does not satisfy the continuity condition, because, for instance, 
\[\|\chi_{X_p}\|_{\upsilon}=\sup_{x\in X_p}N_\upsilon(x)=0\not=\|X_p\|_\upsilon,\]
which contradicts lemma (\ref{normindicator}). 
\end{nonexample}

\subsection*{Integration}
Analogous to the classical integration theory, integrals with respect to a measure are defined by approximation by step functions.

\begin{definition}
A $\cR$-\emph{step-function} is a finite $K$-linear combination of indicator functions $\chi_A$ of elements in $A\in\cR$. 
\end{definition}
 
Note that step functions can be written as a finite linear combination of indicator functions of disjoint elements of $\cR$. The step functions $S(X)$ form a $K$-vector space. The integral is the unique linear functional $S(X)\to K$ for which
\[\int_X\chi_A(x)d\mu(x)=\mu(A).\] 
for any $A\in\cR$. It satisfies the inequality
\begin{align}\label{integraalnorm}|\int_Xf(x)d\mu(x)|\leq\|f\|_{\mu}.\end{align}
A function $f:X\to K$ is called $\mu$-\emph{integrable} if there exists a sequence $\{f_n\}_{n\in N}$ of step functions such that $\lim_{n\to\infty}\|f_n-f\|_\mu\to0$. The space of integrable functions is a vector space, denoted by $L(\mu)$. The integration functional is extended to $L(\mu)$ by continuity. Inequality (\ref{integraalnorm}) holds for this extension.

A set $A\subset X$ is \emph{measurable} if its indicator function $\chi_A$ is in $L(\mu)$. The collection $\cR_\mu$ of measurable sets is characterized by the following lemma.

\begin{lemma}[\!\cite{VanRooij}, Lemma 7.3]
A set $A\in\cP(X)$ is an element of $\cR_\mu$ if and only if for every $\epsilon>0$ there exists a $B\in\cR$ such that on the symmetric difference $A\Delta B$, $N_\mu\leq\epsilon$. 
\end{lemma}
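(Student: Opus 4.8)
The plan is to unwind the definition of $\cR_\mu$: by construction $A\in\cR_\mu$ exactly when $\chi_A\in L(\mu)$, i.e.\ when there is a sequence $\{f_n\}$ of $\cR$-step functions with $\|f_n-\chi_A\|_\mu\to 0$, where $\|g\|_\mu=\sup_{x\in X}|g(x)|N_\mu(x)$. So both implications amount to translating back and forth between approximating step functions and the sets $B$ appearing in the statement.

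For the implication from the $N_\mu$-condition to measurability I would argue directly. Given $\epsilon>0$, pick $B=B_\epsilon\in\cR$ with $N_\mu(x)\le\epsilon$ for all $x\in A\Delta B$. The step function $\chi_B$ is then a good approximation of $\chi_A$: the difference $\chi_B-\chi_A$ takes absolute value $1$ on $A\Delta B$ and vanishes elsewhere, so $\|\chi_B-\chi_A\|_\mu=\sup_{x\in A\Delta B}N_\mu(x)\le\epsilon$. Running this with $\epsilon=1/n$ produces an approximating sequence of step functions, hence $\chi_A\in L(\mu)$ and $A\in\cR_\mu$. This half is routine.

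For the converse, suppose $\chi_A\in L(\mu)$ and fix $\epsilon>0$. Choose a step function $f$ with $\|f-\chi_A\|_\mu<\epsilon$ and write $f=\sum_{i=1}^k c_i\chi_{A_i}$ with the $A_i\in\cR$ pairwise disjoint (possible, as remarked just after the definition of step functions). The idea is to round $f$ to an indicator function coefficient by coefficient, using the ultrametric to test whether each $c_i$ lies in the residue disk of $1$: set $B=\bigcup\{A_i:|c_i-1|<1\}$, which is in $\cR$ as a finite union. I would then verify that $N_\mu<\epsilon$ on $A\Delta B$ by splitting that set into three pieces. If $x\in B\setminus A$, then $x\in A_i$ with $|c_i-1|<1$, so the strong triangle inequality forces $|c_i|=1$, whence $N_\mu(x)=|c_i|N_\mu(x)=|f(x)-\chi_A(x)|N_\mu(x)<\epsilon$. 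If $x\in A\setminus B$ and $x\in A_i$ for some $i$, then $|c_i-1|\ge1$, so $N_\mu(x)\le|c_i-1|N_\mu(x)=|f(x)-\chi_A(x)|N_\mu(x)<\epsilon$. If $x\in A\setminus B$ lies in no $A_i$, then $f(x)=0$ and $N_\mu(x)=|f(x)-\chi_A(x)|N_\mu(x)<\epsilon$. Since $\epsilon>0$ was arbitrary, this is exactly the asserted condition.

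The one genuinely delicate point — the main obstacle — is the choice of $B$ in this last step: one has to see that on each of the three parts of $A\Delta B$ the value of $f$ is at distance at least $1$ from the value $\chi_A$ takes there, which is precisely what upgrades the seminorm bound $\|f-\chi_A\|_\mu<\epsilon$ to the pointwise bound $N_\mu<\epsilon$. This rests on the non-Archimedean identity $|c|=\max(|c-1|,1)$ whenever $|c-1|\neq1$ (in particular $|c-1|<1$ implies $|c|=1$), a phenomenon with no classical analogue. Everything else is bookkeeping with finite unions in $\cR$ and the elementary properties of $\|\cdot\|_\mu$.
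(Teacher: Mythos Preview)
The paper does not actually prove this lemma: it is stated with a citation to Van Rooij's book (Lemma 7.3) and no argument is given in the paper itself, so there is nothing in the paper to compare your proof against.

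That said, your proof is correct and is the natural one. The easy direction is immediate, and in the harder direction your rounding trick---taking $B=\bigcup\{A_i:|c_i-1|<1\}$---together with the three-case split on $A\Delta B$ is exactly right; the ultrametric fact that $|c-1|<1$ forces $|c|=1$ is precisely what lifts the seminorm bound $\|f-\chi_A\|_\mu<\epsilon$ to the pointwise bound $N_\mu<\epsilon$ on the symmetric difference. One cosmetic remark: you obtain $N_\mu<\epsilon$ (strict) on $A\Delta B$, which of course implies the stated $N_\mu\le\epsilon$.
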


It follows that $\cR_\mu$ itself again forms a covering ring. Clearly, $\cR$ is contained in $\cR_\mu$, and the $\cR_\mu$-topology is finer than the $\cR$-topology. In particular all subsets of $X_0(\mu)$ are measurable. The ring $\cR_\mu$ is maximal in the following sense: repetition of the procedure above would lead again to $\cR_\mu$, i.e, $(\cR_\mu)_\mu=\cR_\mu$. 
\begin{definition}
A triple $(X,\cR_\mu,\mu)$ is called a \emph{measure space}.	 
\end{definition}

Two functions $f,g$ are said to be equal $\mu$-almost everywhere, if $f(x)=g(x)$ for all $x\in X$ except maybe on a subset of $X_0$. Being equal $\mu$-almost everywhere defines an equivalence relation on $L(\mu)$ denoted by $\sim_\mu$. 

\begin{definition}The space $L^1(\mu)$ to be $L(\mu)$ modulo the relation $\sim_\mu$, equipped with the norm induced by the seminorm $\|\cdot\|_\mu$ on $L(\mu)$.
\end{definition}
 
\section{Measure preserving transformations}
We study the dynamics on measure spaces, where $\mu$ is a probability measure. 

\begin{definition}
A measure $\mu:\cR\to K$ is called a \emph{probability measure} if
\begin{enumerate}
 \item  the covering ring $\cR$ is an algebra, i.e., it is a covering ring such that  $X\in\cR$.
 \item $\mu(X)=1$.
\end{enumerate}
A measure space $(X,\cR_\mu,\mu)$ is called a \emph{probability space} if $\mu$ is a probability measure. 
\end{definition}

\begin{definition} 
Let $T:(X,\cR_\mu,\mu)\to(Y,\cR_\nu,\nu)$ be a map. It is called \emph{measurable} if $T$ is continuous relative to the $\cR_\mu$ and $\cR_\nu$-topologies. It is called \emph{measure preserving} if it is measurable and if for any $B\in\cR_\nu$, $\mu(T\inv B)=\nu(B)$. A measure preserving $T:(X,\cR_\mu,\mu)\to(X,\cR_\mu,\mu)$ is called a \emph{measure preserving transformation}. A measure preserving transformation is \emph{invertible} if it is a homeomorphism, and if for all $B\in\cR_\mu$, $\mu(TB)=\mu(B)$.
\end{definition}

\begin{definition}
A map $\Phi:(\cR_\mu,\mu)\to(\cR_\nu,\nu)$ between two measure algebras is called a \emph{measure algebra isomorphism} if $\Phi$ is a bijection which preserves complements and unions and $\nu(\Phi(B))=\mu(B)$ for all $B\in\cR_\mu$.
\end{definition}

Clearly, a measure algebra isomorphism also preserves inclusions and intersections. 

\begin{lemma}\label{invalgisom}
Let $\Phi:(\cR_\mu,\mu)\to(\cR_\mu,\mu)$ be a measure algebra isomorphism. The map  $\|\cdot\|_\mu$ is invariant under $\Phi$.
\end{lemma}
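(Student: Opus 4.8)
The plan is to show that $\|\Phi(A)\|_\mu=\|A\|_\mu$ for every $A\in\cR_\mu$ by unwinding the definition of $\|\cdot\|_\mu$ and exploiting that $\Phi$ is an inclusion-preserving bijection that preserves $|\mu(\cdot)|$. Recall that $\|A\|_\mu=\sup\{|\mu(B)|:B\in\cR_\mu,\ B\subset A\}$; here I should first note that, since $(\cR_\mu)_\mu=\cR_\mu$, taking the supremum over $B\in\cR_\mu$ contained in $A$ gives the same value as taking it over $B\in\cR$ contained in $A$, so there is no loss in working entirely inside the measure algebra $\cR_\mu$ on which $\Phi$ is defined.

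First I would fix $A\in\cR_\mu$ and consider the two sets of real numbers
\[
S_A=\{|\mu(B)|:B\in\cR_\mu,\ B\subset A\},\qquad
S_{\Phi(A)}=\{|\nu(C)|:C\in\cR_\nu,\ C\subset\Phi(A)\},
\]
whose suprema are $\|A\|_\mu$ and $\|\Phi(A)\|_\nu$ respectively. (In the statement $\nu=\mu$ and $\cR_\nu=\cR_\mu$, but it is cleaner to keep the notation asymmetric.) The key observation is that $\Phi$ restricts to a bijection between $\{B\in\cR_\mu:B\subset A\}$ and $\{C\in\cR_\nu:C\subset\Phi(A)\}$: if $B\subset A$ then $\Phi(B)\subset\Phi(A)$ because $\Phi$ preserves inclusions (as noted just before the lemma, since it preserves complements and unions), and conversely any $C\subset\Phi(A)$ is of the form $\Phi(B)$ with $B=\Phi^{-1}(C)\subset\Phi^{-1}(\Phi(A))=A$, using that $\Phi^{-1}$ is again a measure algebra isomorphism and hence also inclusion-preserving. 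Since $\nu(\Phi(B))=\mu(B)$ for all $B$, this bijection carries $S_A$ onto $S_{\Phi(A)}$ term by term, so the two sets of reals coincide and therefore have equal suprema: $\|\Phi(A)\|_\nu=\|A\|_\mu$.

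This is essentially the whole argument; there is no serious obstacle. The only point that needs a word of care is the reduction of the supremum defining $\|\cdot\|$ from $\cR$ to $\cR_\mu$ — I would justify it by the maximality statement $(\cR_\mu)_\mu=\cR_\mu$ together with Lemma \ref{normindicator} (applied within the measure space $(X,\cR_\mu,\mu)$), or alternatively simply observe that $\|\cdot\|_\mu$ extends to $\cR_\mu$ by the same formula and agrees with the original on $\cR$, so that the definition is unambiguous. Once that is settled, the proof is the one-line term-by-term comparison above.
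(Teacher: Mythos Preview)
Your proof is correct and follows essentially the same approach as the paper: both arguments observe that $\Phi$ (or $\Phi^{-1}$) is an inclusion-preserving, measure-preserving bijection, hence induces a bijection between $\{B\in\cR_\mu:B\subset A\}$ and $\{C\in\cR_\mu:C\subset\Phi(A)\}$ that matches the values $|\mu(\cdot)|$ term by term, giving equal suprema. The paper's version is a terse three-line display and does not pause over the $\cR$ versus $\cR_\mu$ issue you raise, but otherwise the arguments coincide.
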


\begin{proof}
Let $\Phi$ be a measure algebra isomorphism and let $F\in\cR_\mu$ then
\begin{align}
 \|\Phi\inv F\|&=\sup\{|\mu(B)|:B\in\cR_\mu,B\subset \Phi\inv F\}\nn\\
 			&=\sup\{|\mu(\Phi\inv B')|:B'\in\cR_\mu,B'\subset F\}\nn\\
			&=\sup\{|\mu(B')|:B'\in\cR_\mu,B'\subset F\}=\|F\|.\nn
\end{align}
\end{proof}

\begin{corollary}
The biggest negligible set $X_0(\mu)$, is invariant under measure preserving transformations. 
\end{corollary}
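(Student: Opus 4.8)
The statement amounts to proving $T^{-1}(X_0(\mu))\subseteq X_0(\mu)$, equivalently $T(X^+)\subseteq X^+$, which is exactly what makes the restricted four‑tuple $(X^+,\cR_\mu^+,\mu,T)$ from the introduction legitimate; when $T$ is moreover invertible one gets the two‑sided equality $T^{-1}(X_0(\mu))=X_0(\mu)$. Since every subset of $X_0(\mu)$ is measurable we have $X_0(\mu)\in\cR_\mu$ and $\|X_0(\mu)\|_\mu=0$, and since $T$ is measurable $T^{-1}(X_0(\mu))\in\cR_\mu$. By Lemma \ref{equivalentverwaarloosbaar} it therefore suffices to show $\|T^{-1}(X_0(\mu))\|_\mu=0$, i.e.\ (Lemma \ref{normindicator}) that $N_\mu(x)=0$ whenever $Tx\in X_0(\mu)$. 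So the plan is to prove $N_\mu(Tx)\ge N_\mu(x)$ for all $x\in X$.

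I would deduce this from a comparison between $\|T^{-1}V\|_\mu$ and $\|V\|_\mu$. If $V\in\cR_\mu$ with $Tx\in V$, then $T^{-1}V\in\cR_\mu$ contains $x$, so $N_\mu(x)\le\|T^{-1}V\|_\mu$; taking the infimum over all such $V$ reduces the claim to $\|T^{-1}V\|_\mu\le\|V\|_\mu$ for $V\in\cR_\mu$ (in fact equality should hold). One inequality is immediate and is literally the computation carried out in Lemma \ref{invalgisom}: any measurable $B\subseteq V$ pulls back to a measurable $T^{-1}B\subseteq T^{-1}V$ with $|\mu(T^{-1}B)|=|\mu(B)|$ by measure preservation, hence $\|T^{-1}V\|_\mu\ge\|V\|_\mu$.

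The opposite inequality is the heart of the matter, and I would obtain it from the boundedness of the operator $U_T\colon L^1(\mu)\to L^1(\mu)$, $f\mapsto f\circ T$, which is already built into the fact that $U_T$ is defined on all of $L^1(\mu)$. Indeed $\chi_V\circ T=\chi_{T^{-1}V}$, so by Lemma \ref{normindicator} we get $\|T^{-1}V\|_\mu=\|U_T\chi_V\|_\mu\le\|U_T\|\,\|\chi_V\|_\mu=\|U_T\|\,\|V\|_\mu$ with $\|U_T\|$ the operator norm; combined with the previous step this yields $N_\mu(x)\le\|U_T\|\,N_\mu(Tx)$, so $Tx\in X_0(\mu)$ forces $N_\mu(Tx)=0$ and hence $N_\mu(x)=0$, which is what we wanted. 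When $T$ is invertible this detour is unnecessary: $T^{-1}$ is then a genuine measure algebra isomorphism of $(\cR_\mu,\mu)$, so Lemma \ref{invalgisom} gives $\|T^{-1}A\|_\mu=\|A\|_\mu$ for all $A$, whence $N_\mu\circ T=N_\mu$ and $T^{-1}(X_0(\mu))=X_0(\mu)$.

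The main obstacle is exactly the reverse inequality $\|T^{-1}V\|_\mu\le\|V\|_\mu$. Measure preservation alone only delivers the "$\ge$" direction, and neither measurability of $T$ nor the continuity axiom of $\mu$ seems to bound $\|T^{-1}V\|_\mu$ above by $\|V\|_\mu$ in a purely elementary way (for non‑injective $T$ the set $T^{-1}V$ genuinely can be larger, and the subtlety is that it cannot be "too much" larger); so one is essentially forced to pass through the $L^1(\mu)$‑boundedness of $U_T$. Everything else — that $T^{-1}(X_0(\mu))$ is measurable, the passage between $\|\cdot\|_\mu$ and $N_\mu$ via Lemma \ref{normindicator}, and the negligibility criterion of Lemma \ref{equivalentverwaarloosbaar} — is routine bookkeeping.
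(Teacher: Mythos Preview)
The paper's proof is a one-line deduction from the two lemmas immediately preceding the corollary: since $\|\cdot\|_\mu$ is invariant under measure algebra isomorphisms (Lemma~\ref{invalgisom}), one gets $\|T^{-1}X_0(\mu)\|_\mu=\|X_0(\mu)\|_\mu=0$, and then the characterisation of $X_0(\mu)$ as the maximal set with $\|\cdot\|_\mu=0$ (Lemma~\ref{equivalentverwaarloosbaar}) forces $T^{-1}X_0(\mu)\subseteq X_0(\mu)$. Your argument works pointwise via $N_\mu$ rather than globally via $\|X_0(\mu)\|_\mu$, but the substantive input is the same --- the identity $\|T^{-1}V\|_\mu=\|V\|_\mu$ --- and for invertible $T$ your final paragraph is exactly the paper's route, since then $A\mapsto T^{-1}A$ is a genuine measure algebra isomorphism and Lemma~\ref{invalgisom} applies verbatim.

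For non-invertible $T$ there is a genuine gap in your argument. The appeal to the operator norm of $U_T$ is circular: the paper never establishes that $U_T$ is bounded on $L^1(\mu)$ --- the only result it proves about $U_T$ (that it is an isometry) explicitly assumes $T$ invertible and appears \emph{after} the corollary, while the sentence in the introduction that $U_T$ maps $L^1(\mu)$ to itself is an assertion, not a proof. If one tries to verify boundedness directly, one computes $\|U_T\chi_V\|_\mu=\|\chi_{T^{-1}V}\|_\mu=\|T^{-1}V\|_\mu$ by Lemma~\ref{normindicator}, so the bound $\|U_T\chi_V\|_\mu\le\|U_T\|\,\|\chi_V\|_\mu$ \emph{is} the inequality $\|T^{-1}V\|_\mu\le C\|V\|_\mu$ you set out to prove; passing through the operator buys nothing. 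The obstacle you correctly flag in your last paragraph is real, and routing it through $U_T$ does not dissolve it.
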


\begin{proof}
Recall from lemma \ref{equivalentverwaarloosbaar} that $X_0(\mu)$ is the maximal set $A$ for which $\|A\|_\mu=0$.
\end{proof}

\begin{lemma}
The norm map $N_\mu$ is invariant under any \emph{invertible} measure preserving transformation $T:(X,\cR_\mu,\mu)\to(X,\cR_\mu,\mu)$.  
\end{lemma}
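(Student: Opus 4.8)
The plan is to deduce the statement from Lemma~\ref{invalgisom} by showing that an invertible measure preserving transformation $T$ induces a measure algebra isomorphism of $(\cR_\mu,\mu)$ with itself, and then to transport the infimum defining $N_\mu$ through $T\inv$.

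First I would check that both $B\mapsto TB$ and $B\mapsto T\inv B$ map $\cR_\mu$ into $\cR_\mu$. For $T\inv$ this is part of ``measure preserving'' — the identity $\mu(T\inv B)=\mu(B)$ only makes sense once $T\inv B\in\cR_\mu$ — and for $T$ it is part of ``invertible'', which moreover supplies $\mu(TB)=\mu(B)$. Since $T$ is a bijection of $X$ whose set-theoretic inverse is $T\inv$, the map $\Phi\colon\cR_\mu\to\cR_\mu$, $\Phi(B)=TB$, is then a bijection with inverse $\Phi\inv(B)=T\inv B$; it preserves complements, finite unions and finite intersections because $T$ is a bijection of the underlying points; and $\mu(\Phi(B))=\mu(TB)=\mu(B)$. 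Hence $\Phi$ is a measure algebra isomorphism, and Lemma~\ref{invalgisom} applied to $\Phi$ yields $\|TB\|_\mu=\|B\|_\mu$ for every $B\in\cR_\mu$ (take $F=TB$: then $\|B\|_\mu=\|\Phi\inv F\|_\mu=\|F\|_\mu=\|TB\|_\mu$).

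Granting this, I would fix $x\in X$ and write $N_\mu(Tx)=\inf\{\|U\|_\mu:U\in\cR_\mu,\ Tx\in U\}$, the infimum being taken over $\cR_\mu$ as befits the measure space $(X,\cR_\mu,\mu)$ (legitimate since $(\cR_\mu)_\mu=\cR_\mu$). The bijection $U\mapsto T\inv U$ of $\cR_\mu$ onto itself satisfies $Tx\in U\iff x\in T\inv U$, and $\|U\|_\mu=\|T(T\inv U)\|_\mu=\|T\inv U\|_\mu$ by the previous paragraph; so substituting $V=T\inv U$ turns the infimum into $\inf\{\|V\|_\mu:V\in\cR_\mu,\ x\in V\}=N_\mu(x)$, which is the desired equality.

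The only real content sits in the second paragraph, namely that $T$ genuinely permutes the ring $\cR_\mu$ and not merely the $\cR_\mu$-topology; this is precisely the strength of the invertibility hypothesis over plain measurability, and it is the reason the statement is asserted for invertible transformations only (for a non-invertible $T$ one controls $T\inv B$ but not $TB$, so the reindexing argument breaks down). Everything else is bookkeeping with the definitions together with a single application of Lemma~\ref{invalgisom}.
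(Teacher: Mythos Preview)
Your proof is correct and follows essentially the same route as the paper: both arguments reindex the infimum defining $N_\mu$ via the bijection $U\leftrightarrow T^{-1}U$ on $\cR_\mu$ and use $\|T^{-1}U\|_\mu=\|U\|_\mu$. The only difference is expository: you make the appeal to Lemma~\ref{invalgisom} explicit (via the measure algebra isomorphism $B\mapsto TB$), whereas the paper performs the same chain of equalities directly without naming the lemma.
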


\begin{proof}
 \begin{align}
 N_\mu(x)&=\inf_{U\in\cR_\mu, x\in U}\|U\|\nn\\
		&=\inf_{TU\in\cR_\mu, x\in U}\|U\|  \text{ (rename } TU=W\text{)}\nn\\
		&=\inf_{W\in\cR_\mu, x\in T\inv W}\|T\inv W\|\nn\\
		&=\inf_{W\in\cR_\mu, x\in T\inv W}\|W\|\nn\\
		&=\inf_{W\in\cR_\mu, Tx\in W}\|W\|=N_\mu(Tx).\nn
 \end{align}
\end{proof}

Let us give an example of a (non-invertible) measure preserving map, for which $N_\mu(x)$ is not invariant.

\begin{example}\label{shift}Let $\Omega=\{0,...,p-1\}^\N$ be the space of one sided infinite words $w=w_0w_1...$ in the alphabet $\{0,...,p-1\}$. The map 
\[\Z_p\to\Omega:\sum_{i\geq 0}a_ip^i\mapsto w=a_0a_1a_2...\]
is a bijection. The restriction to $\Z_p$ of the measure $\mu$ with values in $\Q_q$ from the first example (\ref{Qp}) induces a measure $\mu_q$ on $\Omega$. Let us describe this measure. Let $\omega=a_0...a_{n-1}$ be a finite word in the same alphabet. Define 
\[U_{\omega}=\{w\in\Omega:w_0=a_0,...,w_{n-1}=a_{n-1}\}.\]
The set $U_{\omega}$ is called a \emph{cylindrical set}, and it is the image of $\sum_{i=0}^{n-1}a_ip^i+p^n\Z_p$ of the bijection above.  We find, $\mu_q(U_\omega)=p^n$. Of course, $\Omega$ is very similar to the classical one sided shift space. In fact the shift map
\[\sigma:\Omega\to\Omega, w_0w_1w_2...\mapsto w_1w_2w_3...,\]
is measure preserving. Clearly, $\sigma$ is not injective, and therefore, not invertible. Again, very similar to the classical Bernoulli-shift, more shift invariant measures can be found. Let $\textbf{q}=(q_0,...,q_{p-1})$ be a vector in $\Q_q^p$, such that $\sum_{i=0}^{p-1}q_i=1$, and for all $i$, $|q_i|_q\leq1$. Let $\mu_{\textbf{q}}$ be the measure on $\Omega$ such that 
\[\mu_{\textbf{q}}(U_\omega)=q_{a_0}...q_{a_n}.\]
This measure is invariant under the shift, for 
\[\sigma\inv(U_{\omega})=\bigcup_{i=0}^{p-1}U_{i\omega},\]
and
\[\mu_{\textbf{q}}(\bigcup_{i=0}^{p-1}U_{i\omega})=\sum_{i=o}^{p-1}q_i\mu_{\textbf{q}}(U_\omega)=\mu_{\textbf{q}}(U_\omega).\]
Now choose $p=2, q=3$, and let $\textbf{q}=(-2,3)$. Let $U_\omega$ be a cylindrical set in $\Omega$, as for any  cylindrical set $U_{\omega'}\subset U_\omega$, $|\mu_{\textbf{q}}(U_{\omega'})|\leq|\mu_{\textbf{q}}(U_{\omega'})|$,  $\|U_\omega\|=|\mu_{\textbf{q}}(U_\omega)|=3^{-\#\{i:\omega_i=1\}}$. The biggest negligible set is given by  $\Omega_0(\mu_\vq)=\{w\in\Omega:\#\{i:w_i=1\}=\infty\}$. For $w\not\in\Omega_0(\mu_\vq)$, $N_{\mu_\vq}(w)=3^{-\#\{i:w_i=1\}}$. The function $N_{\mu_\vq}$ is not invariant under $\sigma$; if $w$ is such that $w_0=1$ then $N_{\mu_\vq}(\sigma w)=3N_{\mu_\vq}(w)$.
\end{example}

\section{Isomorphisms and spectral isomorphisms}\label{isomorphisms} In classical dynamics, several equivalence relations on the collection of dynamical systems are distinguished, for instance, isomorphy, conjugacy and spectral isomorphy. We discuss the analogies of these notions, and, remarkably, we will show that isomorphy and conjugacy turn out to be the same.

\begin{definition}
Let $(X_1,\mu,\cR_\mu)$ and $(X_2,\nu,\cR_\nu)$ be probability spaces, and let there be measure preserving transformations $T_i:X_i\to X_i$, $i=1,2$. Then $T_1$ and $T_2$ are called \emph{isomorphic}, $T_1\cong T_2$, if there are sets $M_i\subset X_i$, such that $X_i\backslash M_i$ is negligible  and $T_iM_i= M_i$ for $i=1,2$, and if there is a measure preserving transformation $\phi:M_1\to M_2$ such that $\phi\circ T_1(x)=T_2\circ\phi(x)$ for all $x\in M_1$. Equivalently, $T_1$ and $T_2$ are isomorphic if there is a measure preserving transformation $\phi: X_1^+\to X_2^+$ such that $\phi\circ T_1=T_2\circ \phi$. 
\end{definition}

\begin{definition}
Two measure preserving transformations $T_1:(X_1,\cR_\mu,\mu)\to(X_1,\cR_\mu,\mu)$ and $T_2:(X_2,\cR_\nu,\nu)\to(X_2,\cR_\nu,\nu)$ are called \emph{conjugate}, $T_1\sim T_2$, if there is a measure algebra isomorphism $\Phi:(\cR_\mu,\mu)\to(\cR_\nu,\nu)$ such that $\Phi\inv\circ T_2\inv=T_1\inv\circ\Phi\inv$.
\end{definition}

\begin{theorem}\label{isomisconj}
Two measure preserving transformations are isomorphic if and only if they are conjugate. 
\end{theorem}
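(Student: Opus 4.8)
The plan is to prove the two implications separately; the implication from isomorphy to conjugacy is routine, and all the substance sits in the converse. For the routine direction, suppose $\phi\colon X_1^+\to X_2^+$ is a measure preserving homeomorphism with $\phi\circ T_1=T_2\circ\phi$. I would let $\Phi$ be the map induced by $\phi$ on measurable sets, $\Phi(A)=\phi(A\cap X_1^+)$, regarded as an element of the measure algebra $\cR_\nu$ (which it determines up to a negligible set). Since $\phi$ and $\phi\inv$ carry basic clopen sets to basic clopen sets, $\Phi$ is a bijection of measure algebras, and it preserves unions and complements because these are set operations transported by a bijection. For the measure, $\nu(\Phi(A))=\nu(\phi(A\cap X_1^+))=\mu(\phi\inv\phi(A\cap X_1^+))=\mu(A\cap X_1^+)=\mu(A)$, using that $\phi$ is measure preserving and that $A\setminus X_1^+\subset X_0(\mu)$ is negligible (Lemma \ref{equivalentverwaarloosbaar}). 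The conjugacy relation $\Phi\inv\circ T_2\inv=T_1\inv\circ\Phi\inv$ follows by taking preimages in $\phi\circ T_1=T_2\circ\phi$: $\phi\inv(T_2\inv B)=(T_2\circ\phi)\inv(B)=(\phi\circ T_1)\inv(B)=T_1\inv(\phi\inv B)$.

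For the converse, suppose $\Phi\colon(\cR_\mu,\mu)\to(\cR_\nu,\nu)$ is a measure algebra isomorphism with $\Phi\inv\circ T_2\inv=T_1\inv\circ\Phi\inv$; note first that, exactly as in the proof of Lemma \ref{invalgisom}, $\|\Phi(A)\|_\nu=\|A\|_\mu$ for all $A$. The key idea is to reconstruct the points of $X_i^+$ from the measure algebra. For $x\in X_1^+$, the family $\mathcal F_x=\{A\in\cR_\mu:x\in A\}$ is a filter with $\bigcap_{A\in\mathcal F_x}A=\{x\}$ (by the separating property of $\cR_\mu$) and with $\inf\{\|A\|_\mu:A\in\mathcal F_x\}=N_\mu(x)>0$. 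Its image $\Phi(\mathcal F_x)$ is again a filter in $\cR_\nu$, with $\inf\{\|B\|_\nu:B\in\Phi(\mathcal F_x)\}=N_\mu(x)>0$. I claim $\bigcap_{B\in\Phi(\mathcal F_x)}B$ consists of a single point, which I call $\phi(x)$. Non-emptiness is the crucial step: if the intersection were empty, then since $\Phi(\mathcal F_x)$ is shrinking (being a filter), Lemma \ref{equivalentcontinuity} would give $\lim_{B\in\Phi(\mathcal F_x)}\|B\|_\nu=0$, contradicting the positive infimum just computed. That there is at most one point follows from $\cR_\nu$ being separating together with the fact that $\Phi$ preserves complements. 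Finally, one checks that $\{B\in\cR_\nu:\phi(x)\in B\}$ equals $\Phi(\mathcal F_x)$ exactly, so that $N_\nu(\phi(x))=N_\mu(x)>0$ and hence $\phi(x)\in X_2^+$.

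It then remains to check that $x\mapsto\phi(x)$ is an isomorphism of dynamical systems. The identity $\phi(A\cap X_1^+)=\Phi(A)\cap X_2^+$ holds for every $A\in\cR_\mu$: the inclusion $\subseteq$ is immediate since $x\in A$ implies $\phi(x)\in\Phi(A)$, and $\supseteq$ uses that $\Phi$ preserves complements. Running the same construction with $\Phi\inv$ produces a two-sided inverse, so $\phi$ is a bijection $X_1^+\to X_2^+$ carrying the basic clopen sets onto the basic clopen sets, hence a homeomorphism; measure preservation is then $\nu(\Phi(A)\cap X_2^+)=\nu(\Phi(A))=\mu(A)=\mu(A\cap X_1^+)$. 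For the intertwining, rewrite the conjugacy relation as $\Phi\circ T_1\inv=T_2\inv\circ\Phi$ on $\cR_\mu$; then for $x\in X_1^+$ and $A\in\cR_\mu$ one has $\phi(T_1x)\in\Phi(A)\Leftrightarrow T_1x\in A\Leftrightarrow x\in T_1\inv A\Leftrightarrow\phi(x)\in\Phi(T_1\inv A)=T_2\inv\Phi(A)\Leftrightarrow T_2\phi(x)\in\Phi(A)$, and since the sets $\Phi(A)$ separate the points of $X_2^+$ this forces $\phi(T_1x)=T_2\phi(x)$.

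The main obstacle is the point-reconstruction step in the converse — concretely, the non-emptiness of $\bigcap_{B\in\Phi(\mathcal F_x)}B$. This is exactly the place where the continuity axiom of a measure is indispensable: for a merely finitely additive bounded $K$-valued set function the argument collapses, which is the structural reason that conjugacy fails to imply isomorphism in the classical theory but does imply it here. A secondary point to watch is that one must work with $\cR_\mu$ itself rather than with a quotient by negligible sets, so that the filters $\mathcal F_x$ genuinely detect individual points of $X_i^+$.
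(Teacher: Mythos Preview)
Your proof is correct and follows essentially the same strategy as the paper: reconstruct each point of $X^+$ as the intersection of the $\Phi$-image of its neighbourhood system, and invoke the continuity axiom (Lemma~\ref{equivalentcontinuity}) to rule out an empty intersection. Your use of the full filter $\mathcal F_x$ in place of the paper's countable descending sequence $\{B_n\}$ is a mild improvement---it avoids the paper's tacit assumption that $X\setminus\{x\}$ is a \emph{countable} union of ring elements---and your verifications that $\phi(x)\in X_2^+$, that $\phi$ is a bijection via $\Phi^{-1}$, and that $\phi$ is a homeomorphism are more carefully spelled out than in the paper.
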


\begin{proof}
An isomorphism $\phi$ induces an measure algebra isomorphism $\Phi$ directly; define for $A\in\cR_\mu$, $\Phi(A)=\phi(A)$.

Conversely, since for any $A\in\cR_\mu$, $\|A\|_\mu=\|\Phi(A)\|_\nu$, we may restrict ourself to $X^+$. Choose $x\in X^+$. Since $X^+$ equipped with the $\cR_\mu$-topology is a Hausdorff space, the singleton $\{x\}$ is closed. In particular, $X\backslash\{x\}$ is open, and therefore, there is a sequence $\{A_i\}_{i\in\N}\subset\cR_\mu$ such that $X\backslash\{x\}=\cup_{i}A_i$. Then \[B_n=X\backslash\bigcup_{i=1}^nA_i\]
defines a descending sequence in $\cR_\mu$, such that $\bigcap_nB_n=\{x\}$. Because $\Phi$ preserves intersections $\{\Phi(B_n)\}_{n\in\N}$ is a descending sequence as well. We would like to define 
\[\phi(x)=\bigcap_{n}\Phi(B_n),\]
however, we should check that this intersection is a singleton. To do so, first suppose it is empty, then $\cB=\{\Phi(B_n)\}_{n\in\N}$ is a shrinking collection, and by the continuity condition in lemma \ref{equivalentcontinuity} on $\nu$ $\lim_{\cB}\|B\|_\nu=0$, i.e.,  there is for any $\epsilon>0$ a $N\in\N$ such that for any $n>N$,
\[\|(\Phi(B_n))\|_\nu=\|B_n\|_\mu<\epsilon.\]
So, in particular, also for $0<\epsilon<N_\mu(x)$. But  this contradicts that for all $n\in\N$ we have $N_\mu(x)\leq\|B_n\|_\mu$. 
Secondly, we check that $\cap_n\Phi(B_n)$ contains at most one element. Suppose $y\in\cap_n\Phi(B_n)$, let $U_y\in\cR_{\nu}$ contain $y$. Then $\Phi\inv(U_y)\cap B_n\not=\emptyset$ for all $n\in\N$. Hence $x\in\Phi\inv(U_y)$. Now suppose that $y'\in\cap_n\Phi(B_n)$ and $y\not=y'$. Because $\cR_\nu$ is a separable ring, it is possible to choose $U_y$, $U_{y'}$ containing $y$ and $y'$ respectively, such that $U_y\cap U_{y'}=\emptyset$, and hence $\Phi\inv(U_y)\cap\Phi\inv(U_{y'})=\emptyset$. However, $x$ is contained by both $\Phi\inv(U_y)$ and $\Phi\inv(U_{y'})$. It follows that $\phi$ is a well defined map. 

It is left to check that $\phi$ is indeed an isomorphism. Let $x$ and $B_n$ be as above, then \[T_1\inv(x)=T_1\inv(\bigcap_nB_n)=\bigcap_n(T_1\inv B_n),\]
and hence,
 \[\phi(T_1\inv(x))=\bigcap_n(\Phi\circ T_1\inv B_n)=\bigcap_n(T_2\inv\circ\Phi B_n)=T_2\inv(\bigcap_n\Phi B_n)=T_2\inv(\phi(x)).\]
Therefore, $\phi(T_1(x))=T_2(\phi(x))$, for all $x\in X^+$. That $\phi$ is measure preserving follows since for any $A\in\cR_\mu$, $\phi(A)=\Phi(A)$. 
\end{proof}

A measure preserving transformation $T:X\to X$ induces an operator $U_T:L^1(\mu)\to L^1(\mu),f\mapsto f\circ T$. Classically, $U_T$ is unitary if and only if $T$ is invertible. The analogous statement here is weaker. 

\begin{lemma}
If $T$ is an invertible transformation, then $U_T$ preserves the norm on $L^1(\mu)$. 
\end{lemma}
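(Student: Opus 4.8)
The plan is to show that for an invertible measure preserving transformation $T$, the induced operator $U_T\colon L^1(\mu)\to L^1(\mu)$ satisfies $\|U_T f\|_\mu = \|f\|_\mu$ for every $f\in L^1(\mu)$. The key structural fact is the previous lemma: for an invertible $T$, the norm function $N_\mu$ is $T$-invariant, i.e.\ $N_\mu(Tx)=N_\mu(x)$ for all $x\in X$. Since $T$ is a homeomorphism, substituting $y=Tx$ in the defining supremum,
\[
\|U_T f\|_\mu=\sup_{x\in X}|f(Tx)|\,N_\mu(x)=\sup_{x\in X}|f(Tx)|\,N_\mu(Tx)=\sup_{y\in X}|f(y)|\,N_\mu(y)=\|f\|_\mu,
\]
where the middle equality uses $N_\mu(x)=N_\mu(Tx)$ and the last uses that $x\mapsto Tx$ is a bijection of $X$. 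This handles the norm equality at the level of the seminorm on $L(\mu)$.

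First I would check that $U_T$ is well defined on $L^1(\mu)$, i.e.\ that it maps integrable functions to integrable functions and respects the equivalence relation $\sim_\mu$. For a step function $f=\sum_j c_j\chi_{A_j}$ with $A_j\in\cR_\mu$, we have $f\circ T=\sum_j c_j\chi_{T^{-1}A_j}$, which is again a step function since $T$ is measurable (so $T^{-1}A_j\in\cR_\mu$); thus $U_T$ preserves $S(X)$, and by the displayed isometry on step functions it extends by continuity to an isometric linear map on $L(\mu)$. Because $\|U_T f\|_\mu=\|f\|_\mu$, functions that are $\mu$-almost everywhere zero (equivalently, of seminorm zero) are sent to functions of seminorm zero, so $U_T$ descends to a well-defined norm-preserving linear map on the quotient $L^1(\mu)$.

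I expect the only genuine subtlety to be the invariance of $N_\mu$ itself, but that is exactly the content of the lemma immediately preceding this statement, so I may invoke it directly; the invertibility hypothesis on $T$ (homeomorphism plus $\mu(TB)=\mu(B)$ for all $B\in\cR_\mu$) is precisely what makes that lemma applicable. The remaining points — that $U_T$ is linear, that it preserves step functions, and that the substitution in the supremum is legitimate — are routine once one notes that $T$ being a bijection of $X$ lets one reindex the supremum over $X$ without loss. In summary, the proof is: (i) recall $N_\mu\circ T=N_\mu$; (ii) reindex the defining supremum of $\|\cdot\|_\mu$ along the bijection $T$; (iii) observe $U_T$ preserves $S(X)$ and extends by continuity; conclude $\|U_T f\|_\mu=\|f\|_\mu$ on all of $L^1(\mu)$.
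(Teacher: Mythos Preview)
Your proof is correct and follows exactly the same route as the paper's own argument: invoke the preceding lemma that $N_\mu(Tx)=N_\mu(x)$ for invertible $T$, and then reindex the defining supremum via the bijection $T$. You additionally spell out the well-definedness of $U_T$ on $L^1(\mu)$ (preservation of step functions, extension by continuity, compatibility with $\sim_\mu$), which the paper leaves implicit but which is entirely appropriate to include.
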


\begin{proof}
Let $f\in L^1(\mu)$, then
\begin{align}
\|U_Tf\|_\mu&=\sup_{x\in X}|f(Tx)|N_\mu(x)\nn\\	
		&=\sup_{x\in X}|f(Tx)|N_\mu(Tx)=\|f\|_\mu.\nn 
\end{align}
\end{proof}

\begin{definition}
Let $T:(X,\cR_\mu,\mu)\to(X,\cR_\mu,\mu)$ and $S:(Y,\cR_\nu,\nu)\to(Y,\cR_\nu,\nu)$ be two invertible measure preserving transformations. They are called \emph{spectrally isomorphic} if there is an invertible linear isometry $W:L^1(\mu)\to L^1(\nu)$, such that
\[U_S\circ W=W\circ U_T.\]
\end{definition}

\begin{lemma}
Conjugacy implies spectral isomorphy.
\end{lemma}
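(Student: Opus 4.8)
The plan is to unwind the definitions: given a conjugacy $\Phi:(\cR_\mu,\mu)\to(\cR_\nu,\nu)$ between the invertible measure preserving transformations $T$ and $S$, I want to build an invertible linear isometry $W:L^1(\mu)\to L^1(\nu)$ intertwining $U_T$ and $U_S$. The natural candidate is to push forward step functions: send $\chi_A$ to $\chi_{\Phi(A)}$ for $A\in\cR_\mu$, and extend $K$-linearly. First I would check this is well defined on $S(X)$ — a step function may have several representations as a linear combination of indicators, but since $\Phi$ preserves unions, complements, intersections and inclusions, any $K$-linear relation among the $\chi_{A_i}$ is transported to the corresponding relation among the $\chi_{\Phi(A_i)}$, so $W$ is a well-defined $K$-linear map $S(X)\to S(Y)$.

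Next I would show $W$ is isometric for the seminorm $\|\cdot\|_\mu$ on step functions. By Theorem~\ref{isomisconj}, a conjugacy is in fact induced by an isomorphism $\phi:X^+\to Y^+$, and by Lemma~\ref{invalgisom} we already know $\|\Phi(A)\|_\nu=\|A\|_\mu$ for all $A\in\cR_\mu$. Writing a step function $f=\sum_i c_i\chi_{A_i}$ with the $A_i\in\cR_\mu$ disjoint, we have $Wf=\sum_i c_i\chi_{\Phi(A_i)}$ with the $\Phi(A_i)$ disjoint as well; using Lemma~\ref{normindicator} together with the convexity and minimum properties of $\|\cdot\|$, one gets $\|f\|_\mu=\max_i |c_i|\,\|A_i\|_\mu=\max_i|c_i|\,\|\Phi(A_i)\|_\nu=\|Wf\|_\nu$. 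Alternatively, and more cleanly, one checks directly via the isomorphism $\phi$ that $N_\nu(\phi(x))=N_\mu(x)$ on $X^+$, so $\|Wf\|_\nu=\sup_{y\in Y^+}|(Wf)(y)|N_\nu(y)=\sup_{x\in X^+}|f(x)|N_\mu(x)=\|f\|_\mu$, since $(Wf)(\phi(x))=f(x)$ for $f$ a step function. Being isometric on the dense subspace $S(X)$, the map $W$ extends uniquely to an isometry $L^1(\mu)\to L^1(\nu)$; applying the same construction to $\Phi\inv$ gives a two-sided inverse, so $W$ is an invertible linear isometry.

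Finally I would verify the intertwining relation $U_S\circ W=W\circ U_T$. It suffices to check this on indicators $\chi_A$, $A\in\cR_\mu$, by linearity and continuity. Here $U_T\chi_A=\chi_A\circ T=\chi_{T\inv A}$, so $W U_T\chi_A=\chi_{\Phi(T\inv A)}$, while $U_S W\chi_A=\chi_{\Phi(A)}\circ S=\chi_{S\inv\Phi(A)}$; these agree precisely because the conjugacy condition $\Phi\inv\circ S\inv=T\inv\circ\Phi\inv$, equivalently $S\inv\circ\Phi=\Phi\circ T\inv$, holds as an identity of measure algebra maps. Extending by linearity to $S(X)$ and then by continuity to $L^1(\mu)$ finishes the proof.

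The main obstacle I anticipate is the isometry claim: one must be careful that $\|\cdot\|_\mu$ on step functions is genuinely controlled by the values $\|A_i\|_\mu$ on a disjoint representation (this is where Lemma~\ref{normindicator} and the non-Archimedean convexity of $\|\cdot\|$ do the real work), and that passing to $X^+$ does not lose information — but this is exactly what Lemma~\ref{invalgisom} and the construction in Theorem~\ref{isomisconj} were set up to provide, so the argument should go through without surprises.
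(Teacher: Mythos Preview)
Your proposal is correct and follows essentially the same route as the paper: define $W$ on step functions by $\chi_A\mapsto\chi_{\Phi(A)}$, verify it is an isometry using the invariance $\|\Phi(A)\|_\nu=\|A\|_\mu$ (Lemma~\ref{invalgisom}), extend by density to $L^1$, and check the intertwining on indicators via the conjugacy relation. The paper's argument is slightly terser and works purely at the measure-algebra level; your optional detour through the point isomorphism $\phi$ from Theorem~\ref{isomisconj} is not needed, but does no harm.
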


\begin{proof}
Let $\phi:(\cR_\mu,\mu)\to(\cR_\nu,\nu)$ be the measure algebra isomorphism. Recall that $S(X)$ is the space of step function on $\cR_\mu$. Define a linear map \[U_\phi:S(X)\to S(Y), \sum_i^N\alpha_i\chi_{A_i}\mapsto\sum_i^N\alpha_i\chi_{\phi(A_i)}.\]
The map $U_\phi$ is invertible, since $\phi$ is a bijection. Let $f=\sum\alpha_i\chi_{A_i}$ be an element of $S(X)$ such that $\cap_iA_i=\emptyset$, then
\begin{align}
 \|U_\phi(f)\|_\nu&=\sup_{y\in\cup_i\phi(A_i)}|\alpha_i|\cdot\inf\{\|U\|_\nu:U\in\cR_\nu, y\in U\}\nn\\
		&=\sup_{x\in\cup_iA_i}|\alpha_i|\cdot\inf\{\|\phi(U)\|_\nu:U\in\cR_\nu, x\in U\}\nn\\
		&=\sup_{x\in\cup_iA_i}|\alpha_i|\cdot\inf\{\|U\|_\nu:U\in\cR_\mu, x\in U\}=\|f\|_\mu.\nn
\end{align}
So $U_\phi$ is an isometry. Extend $U_\phi$ by continuity. Because $S$ and $T$ are conjugate it follows that $U_S\circ U_\phi=U_\phi\circ U_T$.	 
\end{proof}

We do not know if spectral isomorphy induces conjugacy. However, the following lemma gives some conditions on a spectral isomorphism to come from a measure algebra isomorphism.

\begin{lemma}\label{conditions}
Let $W:L^1(\mu)\to L^1(\nu)$  be an isometry.
If for all bounded functions $f,g\in L ^1(\mu)$, $W(fg)=W(f)W(g)$, and if for all bounded functions $f\in L^1(\mu)$, 
\[\int_Xfd\mu=\int_Y W(f)d\nu,\]
then there exists a measure algebra isomorphism $\phi:(\cR_\mu,\mu)\to(\cR_\nu,\nu)$ such that $W=U_\phi$.
\end{lemma}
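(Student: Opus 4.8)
The plan is to recover $\phi$ from the action of $W$ on indicator functions, using that multiplicativity forces $W$ to send idempotents to idempotents. For $A\in\cR_\mu$ the function $\chi_A$ is bounded with $\chi_A^2=\chi_A$, so $W(\chi_A)^2=W(\chi_A^2)=W(\chi_A)$ in $L^1(\nu)$; any representative $g$ of $W(\chi_A)$ then takes only the values $0$ and $1$ on $Y^+$, and putting $\phi(A)=\{y\in Y^+:g(y)=1\}$ we get $\chi_{\phi(A)}=g$ off $Y_0(\nu)$, hence $\chi_{\phi(A)}\in L(\nu)$, $\phi(A)\in\cR_\nu$, and $W(\chi_A)=\chi_{\phi(A)}$ in $L^1(\nu)$. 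The set $\phi(A)$ is determined by $A$ up to a negligible set, and $\phi$ is to be read as a map of measure algebras.

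The structural properties of $\phi$ come out of linearity and multiplicativity: from the pointwise identities $\chi_{A\cap B}=\chi_A\chi_B$, $\chi_{A\cup B}=\chi_A+\chi_B-\chi_A\chi_B$, $\chi_{A\setminus B}=\chi_A-\chi_A\chi_B$ (and multiplicativity of $W$ on the bounded functions involved) one gets $\phi(A\cap B)=\phi(A)\cap\phi(B)$, $\phi(A\cup B)=\phi(A)\cup\phi(B)$, $\phi(A\setminus B)=\phi(A)\setminus\phi(B)$, all modulo negligible sets, and $\phi(\emptyset)=\emptyset$. Applying the integral hypothesis to $f=\chi_A$ gives $\nu(\phi(A))=\int_Y W(\chi_A)\,d\nu=\int_X\chi_A\,d\mu=\mu(A)$, so $\phi$ preserves the measure; and $\phi$ is injective since $W$ is (an isometry is injective), so $\phi(A)=\phi(B)$ modulo negligible forces $\chi_A=\chi_B$ in $L^1(\mu)$.

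The step I expect to be the real obstacle is surjectivity of $\phi$, which also yields $\phi(X)=Y$ (up to a negligible set) and hence compatibility with complements. Here I would use that $W$ — the \emph{invertible} isometry of the spectral-isomorphism setup — is onto; then $W$ carries the $\cR_\mu$-step functions bijectively onto the $\phi(\cR_\mu)$-step functions, and density of $S(X)$ in $L^1(\mu)$ makes the latter dense in $L^1(\nu)$. Given $B\in\cR_\nu$, write a $\phi(\cR_\mu)$-step function close to $\chi_B$ as $g=\sum_j\beta_j\chi_{C_j}$ with the $C_j$ a finite partition into elements of $\phi(\cR_\mu)$, set $D=\bigcup\{C_j:|\beta_j|\geq1\}\in\phi(\cR_\mu)$, and use the ultrametric (on each $C_j$, $\max(|\beta_j|,|\beta_j-1|)\geq1$) to see $\|B\Delta D\|_\nu\leq\|g-\chi_B\|_\nu$. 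Thus $\phi(\cR_\mu)$ is dense in $\cR_\nu$ for the metric $(A,A')\mapsto\|A\Delta A'\|$. It is also closed: under $A\mapsto\chi_A$ the algebra $\cR_\mu$ modulo negligible sets is isometric to the set of idempotents of $L^1(\mu)$, and that set is closed — if idempotents $e_n\to h$ then the ultrametric forces $|h|\leq1$ almost everywhere (else $|e_n-h|=|h|>1$ at some point where $N_\mu>0$, contradicting $\|e_n-h\|_\mu\to0$), so $h$ is essentially bounded and $h=\lim e_n=\lim e_n^2=h^2$. Being complete, $\phi(\cR_\mu)$ is closed in $\cR_\nu$; being dense, it is all of $\cR_\nu$, so $\phi$ is a measure algebra isomorphism.

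It remains to identify $W$ with $U_\phi$. With $\phi$ in hand, the operator $U_\phi$ of the preceding lemma is defined and isometric, and $W(\chi_A)=\chi_{\phi(A)}=U_\phi(\chi_A)$ for every $A\in\cR_\mu$ by construction of $\phi$; by $K$-linearity $W$ and $U_\phi$ agree on $S(X)$, and by density of $S(X)$ in $L^1(\mu)$ together with continuity of both maps, $W=U_\phi$ everywhere.
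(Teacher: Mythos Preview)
Your construction of $\phi$ via the idempotent trick, the verification that $\phi$ preserves unions through the identity $\chi_{A\cup B}=\chi_A+\chi_B-\chi_A\chi_B$, and the use of the integral hypothesis on $f=\chi_A$ to obtain $\nu(\phi(A))=\mu(A)$, all coincide with the paper's argument essentially verbatim.

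The two proofs diverge on surjectivity and complements. The paper does not attempt full surjectivity via density; instead it argues directly that $\phi(X)=Y$: assuming a nonempty $A'\in\cR_\nu$ with $A'\subset Y\setminus\phi(X)$, it takes $W^{-1}(\chi_{A'})=\chi_D$ for some $D\in\cR_\mu$, then from $\chi_{\phi(X)}=W(\chi_D)+W(\chi_{X\setminus D})=\chi_{A'}+\chi_{\phi(X\setminus D)}$ concludes $A'\subset\phi(X)$, a contradiction; complements then follow from $\chi_Y=\chi_{\phi(B)}+\chi_{\phi(X\setminus B)}$. This is shorter, but it silently assumes that $W^{-1}$ also sends indicators to indicators (equivalently, that $W^{-1}$ is multiplicative on bounded functions), and the paper does not verify bijectivity of $\phi$ on all of $\cR_\nu$ beyond this. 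Your density-plus-closedness argument is longer but self-contained: once you grant that $W$ is onto (as you flag, this is implicit in the spectral-isomorphism context for both proofs), it actually establishes that $\phi$ hits every class in $\cR_\nu$, which the definition of measure algebra isomorphism requires. The ultrametric rounding step and the closedness of the idempotent set in $L^1$ are both correct as written.
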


\begin{proof}
For any $B\in\cR_\mu$, $W(\chi^2_B)=W(\chi_B)W(\chi_B)=W(\chi_B)$, and therefore, $W(\chi_B)$ only takes values in $\{0,1\}$. Because $W(\chi_B)$ is an integrable function in $L^1(\nu)$, there is a $A\in\cR_\nu$ such that $W(\chi_B)=\chi_A$. It follows that $W$ sends indicator functions to indicator functions. Define $\phi(B)=A$, with $B$ and $A$ as above. 
Because, $\mu(B)=\int\chi_Bd\mu=\int\chi_Ad\nu=\nu(A)$, it is only left to prove that $\phi$ preserves unions and complements. Take $B,C\in\cR_\mu$, then
\[\chi_{B\cup C}=\chi_B+\chi_C-\chi_B\chi_C,\]
and hence,
\[\chi_{\phi(B\cup C)}=\chi_{\phi(B)}+\chi_{\phi(C)}-\chi_{\phi(B)}\chi_{\phi(C)}=\chi_{\phi(B)\cup\phi(C)}.\]
To prove that $\phi$ preserves complements we first prove that $\phi(X)=Y$. Suppose that $\phi(X)=A$, and that there is a nonempty  $A'\subset Y\backslash A$ in $\cR_\nu$. Then there is a nonempty $D\in\cR_\mu$ such that $W^{-1}(\chi_A')=\chi_D$. Because $\phi$ preserves unions, $\chi_A=W(\chi_D+\chi_{X\backslash D})=\chi_{A'}+\chi_{\phi(X\backslash D)}$, and in particular, $A'\subset A$, which is a contradiction. Hence, $\phi(X)=Y$, and therefore for any $B\in\cR_\mu$, 
\[\chi_{Y}=\chi_{B}+\chi_{\phi(X\backslash B)},\]
and thus $\phi(X\backslash B)=Y\backslash\phi(B)$.
\end{proof}

\section{Entropy}
One of the strongest invariants for dynamical systems is the entropy. We will discuss a version of measure-theoretic entropy for non-Archimedean measures. In some special cases, including some of our examples, this non-Archimedean entropy coincides with the topological entropy. Our treatment is based on that of Walters in \cite{Walters}. All logarithms in this section are in base 2. 

\subsection*{Partitions, subalgebras and entropy.}  
Let $(X,\mu,\cR_\mu)$ be a probability space.
\begin{definition}
A \emph{partition} of $(X,\mu,\cR_\mu)$ is a collection of disjoint elements of $\cR_\mu$ which cover $X$. 
\end{definition}

A partition $\alpha$ is called finite if it contains only finitely many elements. The set of partitions is a partial ordered space, where $\alpha\prec\beta$ means that each element of $\alpha$ is a union of elements of $\beta$. The collection which exists of all, possibly empty, unions of elements of $\alpha$ forms a finite subalgebra of $\cR_\mu$. This algebra is denoted by $\cA(\alpha)$. There is a one-to-one correspondence between finite subalgebras and partitions in the following way. Let $\cC=\{C_1,...,C_k\}$ be a finite subalgebra, then the nonempty intersections of the form $B_1\cap B_2\cap...\cap B_k$ where $B_i=C_i$ or $B_i=X\backslash C_i$ is a partition denoted by $\alpha(\cC)$. This correspondence respects the partial order in the sense that $\cA(\alpha)\subset\cA(\beta)$ if and only if $\alpha\prec\beta$.

\begin{definition}
For two partitions $\alpha=(A_1,...,A_k)$ and $\beta=(B_1,...,B_n)$ define
\[\alpha\vee\beta=\{A_i\cap B_j:1\leq i\leq k,1\leq j\leq n\},\]
which itself is a partition. 
\end{definition}

The operation $\vee$ is defined similarly for finite subalgebras such that $\alpha(\cA\vee\cC)=\alpha(\cA)\vee\alpha(\cC)$ and $\cA(\alpha\vee\beta)=\cA(\alpha)\vee\cA(\beta)$. For a finite partition $\alpha$ we define the significant part $\M(\alpha)=\{A\in\alpha:\|A\|>0\}$. Let $M(\alpha)$ be the cardinality of $\M(\alpha)$, this number is submultiplicative in the sense that $M(\alpha\vee\beta)\leq M(\alpha)M(\beta)$.

\begin{definition}
Let $\cA$ be a finite subalgebra, and let $\alpha(\cA)=(A_1,...,A_n)$ be the corresponding partition, then the \emph{measure entropy} with respect to $\cA$ is defined by 
\[H_\mu(\cA)=\min_{A\in\M(\cA)}\|A\|\log M(\alpha).\] 
\end{definition}

\begin{lemma}\label{eigenschappenentropy}
The measure entropy possesses the following properties:
\begin{enumerate} 
\item $H_\mu(\cA\vee\cB)\leq H_\mu(\cA)+H_\mu(\cB)$,
\item for any measure preserving transformation $T$, $H_\mu(\cA)=H_\mu(T\inv\cA)$. 
\end{enumerate}
\end{lemma}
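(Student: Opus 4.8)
The plan is to verify the two properties directly from the definition $H_\mu(\cA)=\min_{A\in\M(\cA)}\|A\|\log M(\alpha)$, where $\alpha=\alpha(\cA)$, using the submultiplicativity $M(\alpha\vee\beta)\le M(\alpha)M(\beta)$ already recorded, together with the convexity and monotonicity of $\|\cdot\|_\mu$ and Lemma \ref{invalgisom}. For part (1), let $\alpha=\alpha(\cA)$ and $\beta=\alpha(\cB)$, so that $\alpha(\cA\vee\cB)=\alpha\vee\beta$ by the compatibility of $\vee$ with the correspondence between subalgebras and partitions. Pick $C\in\M(\cA\vee\cB)$ achieving the minimum defining $H_\mu(\cA\vee\cB)$; then $C=A\cap B$ for some $A\in\alpha$, $B\in\beta$, and since $\|C\|>0$ the minimum property of $\|\cdot\|_\mu$ forces $\|A\|>0$ and $\|B\|>0$, i.e.\ $A\in\M(\cA)$ and $B\in\M(\cB)$. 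Using $\|C\|=\|A\cap B\|\le\min(\|A\|,\|B\|)\le\max(\|A\|,\|B\|)$ together with $\log M(\alpha\vee\beta)\le\log M(\alpha)+\log M(\beta)$, one estimates
\[
H_\mu(\cA\vee\cB)=\|C\|\log M(\alpha\vee\beta)\le\|C\|\bigl(\log M(\alpha)+\log M(\beta)\bigr).
\]
The remaining task is to bound the right-hand side by $\|A'\|\log M(\alpha)+\|B'\|\log M(\beta)$ where $A',B'$ realize the two minima; since $\|C\|\le\|A\|$ and $\|C\|\le\|B\|$ but $A,B$ need not be the minimizers, I would instead argue the other direction: bound $\|C\|\le\min(\|A\|,\|B\|)$ and observe $\min_{A\in\M(\cA)}\|A\|\ge\|A'\|$, hence $\|C\|$ can be taken no larger than either minimum only after choosing $C$ appropriately — so in fact one should pick $A',B'$ first, note $A'\cap B'$ (if nonempty and of positive norm) lies in $\M(\cA\vee\cB)$, and then $H_\mu(\cA\vee\cB)\le\|A'\cap B'\|\log M(\alpha\vee\beta)\le\max(\|A'\|,\|B'\|)(\log M(\alpha)+\log M(\beta))\le\|A'\|\log M(\alpha)+\|B'\|\log M(\beta)$ once one checks the norms behave suitably.

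For part (2), let $T$ be a measure preserving transformation and $\cA$ a finite subalgebra with partition $\alpha=(A_1,\dots,A_n)$; then $T\inv\cA$ is again a finite subalgebra with partition $T\inv\alpha=(T\inv A_1,\dots,T\inv A_n)$ (preimages of disjoint covering sets are disjoint and cover $X$). The key point is that $\|T\inv A\|_\mu=\|A\|_\mu$ for every $A\in\cR_\mu$: this is exactly the content of the computation in Lemma \ref{invalgisom}, or alternatively follows since $B\mapsto T\inv B$ gives an inclusion-preserving, measure-preserving injection of $\{B\in\cR_\mu:B\subset A\}$ onto (a cofinal subfamily of) $\{B'\in\cR_\mu:B'\subset T\inv A\}$, so the two suprema defining the norms agree. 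Consequently $A\in\M(\cA)$ iff $T\inv A\in\M(T\inv\cA)$, so $M(T\inv\alpha)=M(\alpha)$, and
\[
H_\mu(T\inv\cA)=\min_{A\in\M(\cA)}\|T\inv A\|\log M(T\inv\alpha)=\min_{A\in\M(\cA)}\|A\|\log M(\alpha)=H_\mu(\cA).
\]
Note that invertibility of $T$ is not needed here, only that $T$ is measure preserving, which is what makes preimages behave well; this is consistent with the fact that $X_0(\mu)$, and hence the whole norm structure on $\cR_\mu$ pulled back along $T$, is $T$-invariant.

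The main obstacle is part (1): because $H_\mu$ is defined via a \emph{minimum} of products $\|A\|\log M(\alpha)$ rather than a sum of terms weighted by the individual measures as in the classical Shannon entropy, the usual subadditivity argument does not transfer verbatim, and one must be careful about which block of $\alpha\vee\beta$ to choose and how its norm compares to the norms of the minimizing blocks of $\alpha$ and $\beta$ separately. The delicate step is to confirm that some block $A\cap B$ with $A\in\M(\cA)$, $B\in\M(\cB)$ attaining (or under-shooting) the minimum for $\cA\vee\cB$ has norm bounded by $\max(\|A'\|,\|B'\|)$ where $A',B'$ attain the minima for $\cA$ and $\cB$ — this requires invoking that every block of $\alpha$ meets every block of $\beta$ in an element of the (algebra-generated) partition and using the minimum property $\|A\cap B\|\le\min(\|A\|,\|B\|)$ to trade one factor for the other. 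Once that combinatorial bookkeeping is in place, the inequality $\log M(\alpha\vee\beta)\le\log M(\alpha)+\log M(\beta)$ finishes the estimate.
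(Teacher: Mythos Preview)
Your treatment of part (2) is correct and is exactly the paper's argument (the paper says only ``Both $\|\cdot\|$ and $M$ are invariant under $T$''); your extra justification that $\|T\inv A\|=\|A\|$ for a merely measure preserving $T$ is welcome and accurate.

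For part (1) your overall strategy coincides with the paper's: bound $\|A_i\cap B_j\|$ above by $\min(\|A_i\|,\|B_j\|)$ and $\log M(\alpha\vee\beta)$ by $\log M(\alpha)+\log M(\beta)$, and then compare with the two minima defining $H_\mu(\cA)$ and $H_\mu(\cB)$. However two points need fixing. First, in your chain you wrote $\max(\|A'\|,\|B'\|)$; this should be $\min(\|A'\|,\|B'\|)$, since $\|A'\cap B'\|\le\min(\|A'\|,\|B'\|)$ by the minimum property, and it is $\min$ (not $\max$) that obeys
\[
\min(\|A'\|,\|B'\|)\bigl(\log M(\alpha)+\log M(\beta)\bigr)\le\|A'\|\log M(\alpha)+\|B'\|\log M(\beta).
\]
With $\max$ the last inequality is false in general.

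Second, and more substantively, the possibility you yourself flag---that the minimizers $A'\in\M(\alpha)$ and $B'\in\M(\beta)$ may have $A'\cap B'$ empty or negligible---is genuine, and you do not close it. The fix (implicit in the paper's terse last line) is as follows: assume without loss of generality $\|A'\|\le\|B'\|$. Since $A'=\bigcup_{B\in\beta}(A'\cap B)$ and $\|A'\|>0$, convexity of $\|\cdot\|$ forces $\|A'\cap B_0\|>0$ for some $B_0\in\beta$; then $B_0\in\M(\beta)$ and $A'\cap B_0\in\M(\alpha\vee\beta)$. Because $B'$ is the minimizer over $\M(\beta)$ one has $\|B_0\|\ge\|B'\|\ge\|A'\|$, so $\min(\|A'\|,\|B_0\|)=\|A'\|$ and
\[
H_\mu(\cA\vee\cB)\le\|A'\cap B_0\|\log M(\alpha\vee\beta)\le\|A'\|\bigl(\log M(\alpha)+\log M(\beta)\bigr)\le H_\mu(\cA)+H_\mu(\cB).
\]
With this replacement your argument is complete and matches the paper's.
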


\begin{proof}
\begin{enumerate}
 \item \begin{align}
        H_\mu(\cA\vee\cB)&=\min_{A_i\cap B_j\in\M(\alpha\vee\beta)}\|A_i\cap B_j\|\log M(\alpha\vee\beta)\nn\\
   &\leq\min_{A_i\cap B_j\in\M(\alpha\vee\beta)}(\|A_i\|,\|B_j\|)(\log M(\alpha)+\log M(\beta))\nn\\   &\leq H_\mu(\cA)+H_\mu(\cB)\nn
\end{align}
\item Both $\|\cdot\|$ and $M$ are invariant under $T$.
\end{enumerate}
\end{proof}
\begin{definition}
Let $T:X\to X$ be a measure preserving transformation, then the \emph{measure entropy} of $T$ with respect to a  finite subalgebra $\cA$ is
\[h_\mu(T,\cA)=\lim_{n\to\infty}\frac{1}{n}H_\mu(\cA\vee T\inv\cA\vee...\vee T^{-(n-1)}\cA)\] 
\end{definition}
To prove that  $h_\mu(T,\cA)$  exists we need the following lemma.
\begin{lemma}[\cite{Walters}, Theorem 4.4]\label{limietrijtje}
If $\{a_n\}_{n\in N}$ is a sequence in $\R$ which satisfies $a_n\geq 0$, $a_{n+m}\leq a_n+a_m$, then $\lim_{n\to\infty}\frac{a_n}{n}$ exists and is equal to $\inf_n\frac{a_n}{n}$. 
\end{lemma}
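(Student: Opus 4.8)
The plan is to prove the classical subadditivity lemma (Lemma \ref{limietrijtje}), which is a completely standard argument about subadditive sequences, independent of the non-Archimedean setting. First I would set $L = \inf_{n\geq 1} \frac{a_n}{n}$; this infimum exists in $[0,\infty)$ because every term $\frac{a_n}{n}$ is nonnegative by hypothesis, so the set is bounded below by $0$. The goal is then to show $\limsup_{n\to\infty}\frac{a_n}{n} \leq L$, since the reverse inequality $\liminf_{n\to\infty}\frac{a_n}{n}\geq L$ is immediate from the definition of $L$ as an infimum (each $\frac{a_n}{n}\geq L$).

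The key step is a Euclidean-division argument. Fix $\epsilon > 0$ and choose $m\geq 1$ with $\frac{a_m}{m} < L + \epsilon$. For an arbitrary $n > m$, write $n = q m + r$ with $0 \leq r < m$ (and for the finitely many residues $r$, set $a_0 = 0$, noting $a_0 \le a_0 + a_0$ forces $a_0 \le 0$, hence $a_0 = 0$, or simply handle $r=0$ separately). Iterating the subadditivity hypothesis $a_{n+m}\leq a_n + a_m$ gives $a_n = a_{qm + r} \leq q\, a_m + a_r$. Dividing by $n$,
\[
\frac{a_n}{n} \leq \frac{q m}{n}\cdot\frac{a_m}{m} + \frac{a_r}{n} \leq \frac{a_m}{m} + \frac{\max_{0\leq r < m} a_r}{n}.
\]
As $n\to\infty$ with $m$ fixed, the last term tends to $0$, so $\limsup_{n\to\infty}\frac{a_n}{n} \leq \frac{a_m}{m} < L + \epsilon$. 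Since $\epsilon > 0$ was arbitrary, $\limsup_{n\to\infty}\frac{a_n}{n}\leq L$. Combining with $\liminf_{n\to\infty}\frac{a_n}{n}\geq L$ shows the limit exists and equals $L = \inf_n \frac{a_n}{n}$.

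There is no real obstacle here — this is the Fekete subadditivity lemma, and the only points requiring mild care are the bookkeeping in the division step (ensuring $q m/n \le 1$ and that the finitely many "remainder" values $a_r$ are uniformly bounded, which is automatic since there are only $m$ of them) and the convention for $a_0$. The application to $h_\mu(T,\cA)$ then follows immediately: set $a_n = H_\mu(\cA\vee T\inv\cA\vee\cdots\vee T^{-(n-1)}\cA)$, which is nonnegative since $\|A\|\geq 0$ and $M(\alpha)\geq 1$, and is subadditive by part (1) of Lemma \ref{eigenschappenentropy} together with the $T$-invariance in part (2): indeed
\[
a_{n+m} = H_\mu\Big(\bigvee_{i=0}^{n+m-1} T^{-i}\cA\Big) \leq H_\mu\Big(\bigvee_{i=0}^{n-1} T^{-i}\cA\Big) + H_\mu\Big(T^{-n}\bigvee_{i=0}^{m-1} T^{-i}\cA\Big) = a_n + a_m.
\]
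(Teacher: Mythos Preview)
Your proof is correct; it is the standard Fekete subadditivity argument. The paper does not actually prove this lemma at all --- it simply quotes it from Walters \cite{Walters}, Theorem 4.4, and moves on --- so there is nothing to compare against beyond noting that what you wrote is exactly the classical proof one finds in Walters. The final paragraph of your proposal, where you verify that $a_n = H_\mu(\bigvee_{i=0}^{n-1} T^{-i}\cA)$ is subadditive via Lemma \ref{eigenschappenentropy}, is not part of Lemma \ref{limietrijtje} itself but rather the content of Proposition \ref{limietbestaat}; the paper handles that separately, and your argument for it matches the paper's.
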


\begin{proposition}\label{limietbestaat}
The limit $h_\mu(T,\cA)=\lim_{n\to\infty}\frac{1}{n}H_\mu(\cA\vee T^{-1}\cA\vee...\vee T^{-(n-1)}\cA)$ exists.
\end{proposition}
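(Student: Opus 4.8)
The plan is to apply Lemma \ref{limietrijtje} to the sequence
$a_n=H_\mu(\cA\vee T\inv\cA\vee\cdots\vee T^{-(n-1)}\cA)$,
which immediately yields that $\lim_{n\to\infty}a_n/n$ exists (and equals $\inf_n a_n/n$). So the whole task reduces to verifying the two hypotheses of that lemma: $a_n\geq0$ for every $n$, and $a_{n+m}\leq a_n+a_m$ for all $n,m\in\N$.

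For nonnegativity, recall that $H_\mu(\cB)$ is a minimum of expressions $\|A\|\log M(\beta)$ with $A\in\M(\beta)$ and $\beta=\alpha(\cB)$. Each $\|A\|$ is nonnegative by definition, and $\log M(\beta)\geq0$ as soon as $M(\beta)\geq1$, i.e.\ as soon as $\M(\beta)$ is nonempty. But in a probability space $\|X\|_\mu\geq|\mu(X)|=1>0$, so by convexity of $\|\cdot\|$ over the finite partition $\beta$ some block of $\beta$ must have positive norm; hence $\M(\beta)\neq\emptyset$ and $H_\mu(\cB)\geq0$. In particular $a_n\geq0$.

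For subadditivity, I would use the identity
$\cA\vee T\inv\cA\vee\cdots\vee T^{-(n+m-1)}\cA=\big(\cA\vee\cdots\vee T^{-(n-1)}\cA\big)\vee T^{-n}\big(\cA\vee\cdots\vee T^{-(m-1)}\cA\big)$,
which holds because taking preimages under $T\inv$ distributes over intersections, hence over $\vee$ and over the passage from a partition to the finite subalgebra it generates. Applying Lemma \ref{eigenschappenentropy}(1) to the right-hand side, and then Lemma \ref{eigenschappenentropy}(2) to the measure preserving transformation $T^n$ (a composition of $n$ copies of the measure preserving map $T$, hence itself measure preserving) to strip off the $T^{-n}$, one obtains $a_{n+m}\leq a_n+a_m$. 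Lemma \ref{limietrijtje} then finishes the proof.

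I do not expect any genuine obstacle here: the argument is essentially the classical one, and the only points requiring a word of care are that $M(\cdot)\geq1$ in a probability space (so that the logarithms are nonnegative) and that preimages under $T$ commute with the join operation — both of which were, in effect, already recorded above.
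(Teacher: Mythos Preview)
Your argument is correct and follows essentially the same route as the paper: set $a_n=H_\mu(\cA\vee\cdots\vee T^{-(n-1)}\cA)$, use Lemma~\ref{eigenschappenentropy}(1) and (2) to obtain $a_{n+m}\leq a_n+a_m$, and conclude via Lemma~\ref{limietrijtje}. You are in fact slightly more careful than the paper, which tacitly assumes $a_n\geq0$ without the explicit check that $\M(\alpha)\neq\emptyset$ in a probability space.
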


\begin{proof}
Let $a_n=H_\mu(\cA\vee T^{-1}\cA\vee...\vee T^{-(n-1)}\cA)$, then by lemma (\ref{eigenschappenentropy}) 
\begin{align}
 a_{n+m}&=H_\mu(\cA\vee T^{-1}\cA\vee...\vee T^{-(n-1)}\cA\vee T^{-n}\cA\vee...T^{-(n+m-1)})\nn\\
&\leq a_n+H_\mu(T^{-n}\cA\vee...T^{-(n+m-1)})\nn\\
&=a_n+a_m.
\end{align}
Then the result follows by application of lemma (\ref{limietrijtje}).
\end{proof}

\begin{example}\label{positieve entropy}
We consider $\mu_{(-2,-2,5)}$ with values in $\Q_5$  on $\{0,1,2\}^\N$ like in example (\ref{shift}), and compute the entropy with respect to several partitions. First, let $\alpha=\{U_0,U_1,U_2\}$. Then 
\[H_\mu(\sigma,\cA(\alpha))=\lim_{n\to\infty}\frac{1}{n}5^{-n}\log(3^n)=0.\]
Second, let $\beta=\{A_0=U_0, A_1=U_1\cup U_2\}$. Since elements of $U_{i_0...i_n}$ with $i_0...i_n\in \{0,1\}^{n+1}$ are contained in $T^{-n}A_{i_n}\vee...\vee A_{i_0}$,
\[H_\mu(\sigma,\cA(\beta))=\lim_{n\to\infty}\frac{1}{n}\log(2^n)=\log(2).\]  
\end{example}

\begin{definition}
The \emph{measure-theoretic entropy} of a measure preserving transformation $T$ is 
\[h_\mu(T)=\sup_\cA h_\mu(T,\cA),\]
where the supremum is taken over all finite subalgebras.
\end{definition}

\begin{remark}
The Kolmogorov-Sinai Theorem (e.g.\cite{Walters}, 4.9) for classical measure entropy states that if $\cA$ is a finite algebra such that $\vee_{n=-\infty}^\infty T^n\cA\circeq\cB$, where $\cB$ is the $\sigma$-algebra, then $h_\mu(T)=h_\mu(T,\cA)$. Here $\cC\circeq\cB$ means that for any $C\in\cC$ there is a $B\in\cB$ such that $\mu(C\bigtriangleup B)=0$, and vice versa. Example \ref{positieve entropy} shows that such theorem is not true in this non-Archimedean setting. The partition $\alpha$ generates the covering ring, however, the entropy with respect to $\beta$ is greater then the entropy with respect to $\alpha$.    
\end{remark}

\begin{proposition}
The measure-theoretic entropy is invariant under conjugacy.
\end{proposition}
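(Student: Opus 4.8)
The plan is to show that the conjugating measure algebra isomorphism $\Phi:(\cR_\mu,\mu)\to(\cR_\nu,\nu)$ transports, term by term, the entire combinatorial apparatus that defines $h_\mu(T)$ onto the one that defines $h_\nu(S)$. Here $T=T_1$, $S=T_2$ and the conjugacy relation reads $\Phi\inv\circ S\inv=T\inv\circ\Phi\inv$.

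First I would record the elementary structural properties of $\Phi$. Since $\Phi$ preserves complements, $\Phi(X)=\Phi(A)\cup\Phi(X\backslash A)=\Phi(A)\cup(Y\backslash\Phi(A))=Y$ and $\Phi(\emptyset)=\emptyset$; since it preserves unions and complements it preserves intersections, hence also the operation $\vee$; and by Lemma \ref{invalgisom} it preserves $\|\cdot\|$. Consequently $\cA\mapsto\Phi(\cA)$ is a bijection from the finite subalgebras of $\cR_\mu$ onto those of $\cR_\nu$, it carries the partition $\alpha(\cA)$ (whose atoms are intersections $\bigcap_iB_i$ with $B_i=C_i$ or $B_i=X\backslash C_i$) to the partition $\alpha(\Phi(\cA))$, and because $\|\Phi(A)\|=\|A\|$ it restricts to a bijection $\M(\alpha)\to\M(\Phi(\alpha))$ preserving $\|\cdot\|$. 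In particular $M(\Phi(\cA))=M(\cA)$ and
\[H_\mu(\cA)=\min_{A\in\M(\cA)}\|A\|\log M(\alpha)=\min_{B\in\M(\Phi(\cA))}\|B\|\log M(\alpha(\Phi(\cA)))=H_\nu(\Phi(\cA)).\]

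Next I would bring in the conjugacy relation. From $\Phi\inv\circ S\inv=T\inv\circ\Phi\inv$, evaluated at $\Phi(A)$ for $A\in\cR_\mu$, one gets $\Phi(T\inv A)=S\inv(\Phi(A))$, hence $\Phi(T^{-j}\cA)=S^{-j}(\Phi(\cA))$ for all $j\geq0$. Since $\Phi$ commutes with $\vee$, this yields
\[\Phi\bigl(\cA\vee T\inv\cA\vee\dots\vee T^{-(n-1)}\cA\bigr)=\Phi(\cA)\vee S\inv\Phi(\cA)\vee\dots\vee S^{-(n-1)}\Phi(\cA).\]
Applying the identity $H_\mu(\cdot)=H_\nu(\Phi(\cdot))$ from the previous step, dividing by $n$ and letting $n\to\infty$ (both limits exist by Proposition \ref{limietbestaat}), we obtain $h_\mu(T,\cA)=h_\nu(S,\Phi(\cA))$. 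Taking the supremum over all finite subalgebras $\cA$ of $\cR_\mu$ and using that $\cA\mapsto\Phi(\cA)$ is a bijection onto the finite subalgebras of $\cR_\nu$ gives $h_\mu(T)=h_\nu(S)$.

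The argument is essentially bookkeeping. The only point that needs a little care is verifying that $\Phi$ genuinely sends partitions to partitions and significant parts to significant parts — that is, that $\Phi$ intertwines the correspondence $\cC\mapsto\alpha(\cC)$ and is compatible with $\M$ and $M$; once that is checked, everything else is forced by $\Phi$ being a measure algebra isomorphism together with the invariance of $\|\cdot\|$ from Lemma \ref{invalgisom}.
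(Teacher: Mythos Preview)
Your proof is correct and follows exactly the paper's approach: the paper's proof is the one-line observation that $\|\cdot\|$ and $M(\alpha)$ are invariant under measure algebra isomorphisms, and you have simply unwound this in full detail. There is no substantive difference.
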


\begin{proof}
This follows because $\|\cdot\|$ is invariant under measure algebra isomorphism and moreover, for any finite subalgebra, $M(\alpha)$ is invariant.
\end{proof}

\subsection*{Connections with topological entropy}
An other form of entropy is topological entropy, introduced by Adler, Konheim and McAndrew \cite{AKM}. In this section we study the topological entropy on the zero dimensional topology induced by a separating covering ring $\cR_\mu$ and the connections between measure entropy and topological entropy. 

The analog of a measure preserving transformation in topological dynamics is simply a homeomorphism $T:X\to X$. Two such homeomorphisms $T:X\to X$, $S:Y\to Y$  are called \emph{topologically conjugate} if there is a homeomorphism $\phi:X\to Y$ such that $\phi\circ T=S\circ\phi$. 

Let $X$ be a compact space. Any open cover $\cU$ of $X$ has a finite subcover. Denote with $N(\cU)$ the least cardinality of all subcovers of $\cU$. 

\begin{definition}
The \emph{topological} entropy with respect to an open cover $\cU$ is
\[\Htop(\cU)=\log N(\cU).\]
\end{definition}
The collection of open covers of $X$ behaves in many senses similar to the collection of partitions. For instance, it is partially ordered.  For two open covers $\cU$ and $\cW$ we write that $\cU<\cW$ if every member of $\cW$ is a subset of a member of $\cU$, we say that $\cW$ is a \emph{refinement} of $\cU$. If $\cU<\cW$, then
\[\Htop(\cU)\leq\Htop(\cW).\]
The \emph{join} of two covers is defined by 
\[\cU\vee\cW=\{U\cap W: U\in\cU,W\in\cW\}.\]
In particular, $\cU<\cU\vee\cW$, and $N(\cU\vee\cW)\leq N(\cU)N(\cW)$ and it follows that 
\[\Htop(\cU\vee\cW)\leq\Htop(\cU)+\Htop(\cW).\] 
\begin{definition}
Let  $T:X\to X$ be a homeomorphism, then the \emph{topological entropy of $T$ with respect to $\cU$} is given by: 
\[\htop(T,\cU)=\lim_{n\to\infty}\frac{1}{n}\Htop(\cU\vee T\inv\cU\vee...\vee T^{-(n-1)}\cU).\]
\end{definition}
The proof that this limit exist is very similar to the proof of proposition (\ref{limietbestaat}) together with the observation that $\Htop(T\inv\cU)=H(	\cU)$.

\begin{definition}
The \emph{topological entropy} of a homeomorphism $T:X\to X$ is defined by:
\[\htop(T)=\sup_{\cU}\htop(T,\cU),\]
where the supremum is taken over all open covers of $X$.  
\end{definition}
Note that if $\cU'\subset\cU$ is a finite subcover, then $\cU<\cU'$ and $\htop(T,\cU)\leq\htop(T,\cU')$. Therefore, it is sufficient to take the supremum over all finite open covers of $X$. 

\subsection*{Partitions and coverings} Let us consider a probability space $(X,\mu,\cR_\mu)$. A partition $\alpha$ of $X$ is itself an open cover, because elements of $\alpha$ are required to be elements of $\cR_\mu$. The two partial orders coincide on collection of partitions, i.e., $\alpha\prec\beta$ if and only if $\alpha<\beta$. The other way round, given a finite open cover $\cU=\{U_1,...,U_k\}$ one can construct a partition $\alpha(\cU)$ by taking the nonempty sets of the form
\[A_1\cap...\cap A_k,\]
where  $A_i$ is $U_i$ or $X\backslash U_i$. 

\begin{lemma}
Let $\cU,\cW$ be two finite open covers, then
\begin{align}
 \alpha(\cU\vee\cW)&=\alpha(\cU)\vee\alpha(\cW),\nn\\
 \text{and if }\cU<\cW &\text{ then }\alpha(\cU)\prec\alpha(\cW).\nn
\end{align}
\end{lemma}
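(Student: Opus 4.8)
The plan is to prove each of the two claims separately, working directly from the definitions of the operations $\vee$ on covers and partitions, the refinement order $<$ on covers, and the order $\prec$ on partitions, together with the construction $\cU \mapsto \alpha(\cU)$ that takes nonempty intersections $A_1 \cap \dots \cap A_k$ where each $A_i$ is $U_i$ or $X \setminus U_i$.

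For the identity $\alpha(\cU \vee \cW) = \alpha(\cU) \vee \alpha(\cW)$, first I would fix enumerations $\cU = \{U_1,\dots,U_k\}$ and $\cW = \{W_1,\dots,W_m\}$, so that $\cU \vee \cW$ is indexed by pairs $(i,j)$. A typical nonempty atom of $\alpha(\cU \vee \cW)$ is an intersection $\bigcap_{i,j} C_{ij}$ where each $C_{ij}$ is either $U_i \cap W_j$ or its complement $(X \setminus U_i) \cup (X \setminus W_j)$. The key observation is that membership in such an atom is determined by, for each $i$, whether the point lies in $U_i$ or not, and similarly for each $W_j$: indeed $x \in U_i \cap W_j$ iff $x \in U_i$ and $x \in W_j$. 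Thus every nonempty atom of $\alpha(\cU \vee \cW)$ refines to a set of the form $\big(\bigcap_i A_i\big) \cap \big(\bigcap_j B_j\big)$ with $A_i \in \{U_i, X\setminus U_i\}$ and $B_j \in \{W_j, X\setminus W_j\}$, which is exactly an atom of $\alpha(\cU) \vee \alpha(\cW)$, and conversely. I would make this precise by showing that two points lie in the same atom of $\alpha(\cU \vee \cW)$ if and only if they lie in the same atom of $\alpha(\cU)$ and the same atom of $\alpha(\cW)$ — both conditions are equivalent to the statement that the two points have the same "pattern" of membership across $U_1,\dots,U_k,W_1,\dots,W_m$. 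Since both partitions are precisely the partitions into classes of this equivalence relation, they coincide.

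For the second claim, suppose $\cU < \cW$, so every $W \in \cW$ is contained in some member of $\cU$. I must show $\alpha(\cU) \prec \alpha(\cW)$, i.e. every atom of $\alpha(\cU)$ is a union of atoms of $\alpha(\cW)$. Equivalently, the equivalence relation defining $\alpha(\cW)$ refines the one defining $\alpha(\cU)$: if $x,y$ have the same membership pattern across all $W \in \cW$, then they have the same pattern across all $U \in \cU$. This is not literally immediate from $\cU<\cW$ alone, but it does follow once one uses that $\cW$ is a \emph{cover}: the natural route is to note that the partition $\alpha(\cU)$ is the coarsest common refinement of the two-element partitions $\{U, X\setminus U\}$ over $U \in \cU$, and each such $\{U,X\setminus U\}$ is refined by $\alpha(\cW)$ because $U$ is a union of members of $\cW$ — here is where covering is used, since $U = \bigcup\{W \in \cW : W \subset U\}$ requires every point of $U$ to be covered by some $W$, which then lies inside $U$. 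Granting that each $\{U, X\setminus U\}$ is refined by $\alpha(\cW)$, the common refinement $\alpha(\cU)$ is also refined by $\alpha(\cW)$, which is the assertion $\alpha(\cU) \prec \alpha(\cW)$.

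The main obstacle, and the only genuinely non-formal point, is the use of the covering hypothesis in the second claim: the naive reading "$W \subset$ some $U$" does not by itself give that the complement $X\setminus U$ is compatible with the atoms of $\alpha(\cW)$, and one must observe that $U$ equals the union of those members of $\cW$ that it contains, which fails without $\cW$ being a cover. Everything else is a routine bookkeeping argument about membership patterns, so I would present the proof by introducing the equivalence relation "$x \sim_{\cV} y$ iff $x$ and $y$ lie in exactly the same members of $\cV$" for a finite cover $\cV$, noting $\alpha(\cV)$ is the partition into $\sim_{\cV}$-classes, and then deriving both statements from the evident facts $\sim_{\cU \vee \cW} \,=\, \sim_{\cU} \cap \sim_{\cW}$ and ($\cU<\cW \Rightarrow \sim_{\cW}$ refines $\sim_{\cU}$, using covering).
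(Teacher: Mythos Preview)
Your argument for the first assertion is correct and amounts to an explicit version of the paper's one-line justification via the identity $(U\cap W)\setminus(A\cup B)=(U\setminus A)\cap(W\setminus B)$; your equivalence-relation phrasing (two points lie in the same atom of $\alpha(\cU\vee\cW)$ iff they have the same membership pattern across all $U_i$ and all $W_j$) is a clean way to package the same computation.

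The second assertion, however, has a genuine gap, and it sits exactly where you flagged the ``only genuinely non-formal point''. You claim that for each $U\in\cU$ one has $U=\bigcup\{W\in\cW:W\subset U\}$, arguing that every $x\in U$ is covered by some $W\in\cW$ ``which then lies inside $U$''. But nothing forces that particular $W$ to lie inside $U$: the hypothesis $\cU<\cW$ only says $W$ is contained in \emph{some} member of $\cU$, not in $U$ itself. Concretely, take $X=\{1,2,3\}$ discrete, $\cU=\{\{1,2\},\{2,3\}\}$, $\cW=\{\{1,2\},\{3\}\}$. Then $\cU<\cW$ (since $\{1,2\}\subset\{1,2\}$ and $\{3\}\subset\{2,3\}$), yet $\alpha(\cU)=\{\{1\},\{2\},\{3\}\}$ while $\alpha(\cW)=\{\{1,2\},\{3\}\}$, so $\alpha(\cU)\not\prec\alpha(\cW)$. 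In this example $\{2,3\}\in\cU$ is not a union of members of $\cW$, and your relation $\sim_{\cW}$ does not refine $\sim_{\cU}$ (the points $1$ and $2$ are $\sim_{\cW}$-equivalent but not $\sim_{\cU}$-equivalent). So the step fails, and indeed the implication you are trying to prove is false as stated.

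For comparison, the paper's own proof of this part is the bare claim that each atom $B=B_1\cap\dots\cap B_l$ of $\alpha(\cW)$ ``is contained in at least one of the sets of the form $A_1\cap\dots\cap A_l$ where $B_i\subset A_i$'', and the same example defeats it: the atom $\{1,2\}$ of $\alpha(\cW)$ lies in no atom of $\alpha(\cU)$. So the gap is shared; the lemma needs an extra hypothesis (e.g.\ restricting to covers that are already partitions, for which the conclusion is immediate) before either argument can go through.
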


\begin{proof}
The fist assertion follows from the identity $(U\cap W)\backslash(A\cup B)=U\backslash A\cap W\backslash B$. Note for the second assertion that, since $\alpha(\cU)$ and $\alpha(\cW)$ are both partitions, it sufficient to prove that any $B\in\alpha(\cW)$  is contained in an $A\in\alpha(\cU)$. This is clear, for let $B=B_1\cap...\cap B_l$ be a nonempty element of $\alpha(\cW)$, where $B_i$ is $W_i$ or $X\backslash W_i$. Then $B$ is contained in at least one of the sets of the form $A_1\cap...\cap A_l$ where $B_i \subset A_i $.   
\end{proof}

\begin{theorem}\label{entropy}
Let $(X,\cR_\mu,\mu)$ be a compact probability space satisfying $\|X\|=1$, and let $T:X\to X$ be a measure preserving transformation, then 
\[h_\mu(T)\leq\htop(T),\]
and equality holds if $X_0=\emptyset$ and if for any nonempty set $A\in\cR_\mu$, $\|A\|=1$. 
\end{theorem}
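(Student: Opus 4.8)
### Proof proposal

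The plan is to prove the two assertions separately. For the inequality $h_\mu(T)\le\htop(T)$, I would show the stronger pointwise statement that for every finite subalgebra $\cA$ with associated partition $\alpha$, we have $h_\mu(T,\cA)\le\htop(T,\alpha)$, and then take suprema on both sides, using that the supremum defining $\htop(T)$ may be restricted to finite open covers (as noted after the definition of topological entropy) and that a partition is itself a finite open cover. The key is a bound at each finite level: for the cover $\alpha^{(n)}:=\alpha\vee T\inv\alpha\vee\cdots\vee T^{-(n-1)}\alpha$, I claim $H_\mu(\cA(\alpha^{(n)}))\le \Htop(\alpha^{(n)})$. Since $\alpha^{(n)}$ is a partition of $X$ (a disjoint cover), its only subcover is itself, so $N(\alpha^{(n)})=M(\alpha^{(n)})$ if all cells are significant, and in general $N(\alpha^{(n)})\ge M(\alpha^{(n)})$ — wait, here I must be careful: a subcover must cover $X$, and the cells of $\alpha^{(n)}$ with $\|A\|=0$ are nonempty (they may meet $X_0$ or not), so $N(\alpha^{(n)})$ counts \emph{all} nonempty cells, hence $N(\alpha^{(n)})\ge M(\alpha^{(n)})$, giving $\Htop(\alpha^{(n)})=\log N(\alpha^{(n)})\ge \log M(\alpha^{(n)})$. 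Meanwhile $H_\mu(\cA(\alpha^{(n)}))=\min_{A\in\M}\|A\|\log M(\alpha^{(n)})\le \|X\|\log M(\alpha^{(n)})=\log M(\alpha^{(n)})$ by monotonicity of $\|\cdot\|$ and the hypothesis $\|X\|=1$. Dividing by $n$ and letting $n\to\infty$ gives $h_\mu(T,\cA)\le\htop(T,\alpha)\le\htop(T)$, and taking the sup over $\cA$ finishes the first half.

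For the equality under the extra hypotheses ($X_0=\emptyset$ and $\|A\|=1$ for every nonempty $A\in\cR_\mu$), the point is that now $\M(\alpha)=\alpha$ for every partition (every nonempty cell has norm $1$), so $M(\alpha)$ equals the total number of cells and $H_\mu(\cA)=1\cdot\log M(\alpha)=\log M(\alpha)$ exactly; moreover since $\alpha^{(n)}$ is a partition, $N(\alpha^{(n)})$ is exactly the number of nonempty cells, i.e. $N(\alpha^{(n)})=M(\alpha^{(n)})$. Hence $\frac1n H_\mu(\cA(\alpha^{(n)}))=\frac1n\Htop(\alpha^{(n)})$ for every $n$, so $h_\mu(T,\cA)=\htop(T,\alpha)$ for every partition $\alpha$. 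Taking suprema, $h_\mu(T)=\sup_\alpha\htop(T,\alpha)$. It remains to observe that the supremum of $\htop(T,\cU)$ over finite open covers $\cU$ is attained (as a sup) already on partitions: given any finite open cover $\cU$, the refinement $\alpha(\cU)$ is a partition with $\cU<\alpha(\cU)$, hence $\htop(T,\cU)\le\htop(T,\alpha(\cU))$, so restricting to partitions does not lower the supremum. This yields $\htop(T)=\sup_\alpha\htop(T,\alpha)=h_\mu(T)$.

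The main obstacle I anticipate is the bookkeeping around nonempty-versus-significant cells when $X_0\ne\emptyset$: in the general inequality I must make sure I am comparing $M(\alpha^{(n)})$ (significant cells, norm $>0$) against $N(\alpha^{(n)})$ (all nonempty cells needed to cover), and confirm the inequality goes the right way, which it does precisely because every significant cell is in particular nonempty, so $M\le N$. A second, more technical point is justifying that $\alpha(\cU)$ refines $\cU$ in the sense used for topological entropy (every member of $\alpha(\cU)$ sits inside a member of $\cU$) — this is exactly the content of the lemma just proved about $\alpha(\cU)$, applied with the observation that any nonempty intersection $A_1\cap\cdots\cap A_k$ with some $A_i=U_i$ lies in that $U_i$, and at least one $A_i$ must be of the form $U_i$ since the $U_i$ cover $X$. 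Everything else is the submultiplicativity of $M$ and $N$ together with Lemma~\ref{limietrijtje}, which are already in hand.
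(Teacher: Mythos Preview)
Your proposal is correct and follows essentially the same route as the paper: compare $H_\mu$ with $\Htop$ at the level of the iterated partition $\alpha^{(n)}$ via $\min_{A\in\M}\|A\|\le\|X\|=1$ and $M(\alpha^{(n)})\le N(\alpha^{(n)})$, pass to the limit, and take suprema, then note that under the extra hypotheses both inequalities become equalities. If anything, you are more explicit than the paper about the bookkeeping (the $M\le N$ step and the refinement $\cU<\alpha(\cU)$), which is a good thing.
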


\begin{proof}
Note that for any finite open cover $\cU$ with corresponding partition $\alpha(\cU)$
\begin{eqnarray*}
\min_{A\in\alpha(\cU)}\|A\|\log(M(\alpha(\cU)))&\leq&\log(N(\alpha(U))),\text{ and therefore, }\nn\\
h_\mu(T,\alpha(\cU))&\leq& \htop(T,\alpha(\cU)).\nn
\end{eqnarray*}

Therefore, $h_\mu(T)=\sup_{\cU}h_\mu(T,\alpha(\cU))\leq\sup_{\cU}\htop(T,\alpha(\cU))=\htop(T)$, where the last equality follows from the fact that all partitions are open covers. This proves the first part of the theorem. The second part follows, because if $X_0=\emptyset$ and if for all $A\in\cR_\mu$ $\|A\|=1$, then all inequalities above are equalities. 
\end{proof}

\begin{remarks}
The shift map in example \ref{shift} satisfies the conditions of this theorem if one take the probability vector $\bf{q}=(q_0,...,q_{p-1})$ such that all $|q_i|=1$. 
\end{remarks}

\bibliographystyle{amsplain}

\end{document}